\date{}
\renewcommand{\uppercasenonmath}[1]{}
\numberwithin{equation}{section} \theoremstyle{plain}
\newtheorem{lem}{Lemma}[section]
\newtheorem{cor}[lem]{Corollary}
\newtheorem{prop}[lem]{Proposition}
\newtheorem{thm}[lem]{Theorem}
\newtheorem{cond}[lem]{Condition}
\newtheorem{definition}[lem]{Definition}
\newtheorem{Ex}[lem]{Example}
\newtheorem{Quest}[lem]{Question}
\newtheorem{Property}[lem]{Property}
\newtheorem{Properties}[lem]{Properties}
\newtheorem{Subprops}{}[lem]
\newtheorem{Para}[lem]{}
\newtheorem{fact}[lem]{Fact}
\newtheorem{remark}[lem]{Remark}
\newtheorem{rem}[lem]{Remark}
\newtheorem*{ack*}{ACKNOWLEDGEMENTS}
\newcommand{\pf}{\noindent\begin {proof}}
\newcommand{\epf}{\end{proof}}
\newcommand{\C}{\mathcal{C}}
\begin{document}
\begin{center}
{\Large  \bf  Proper resolutions and Gorensteinness in extriangulated categories}

\vspace{0.5cm}  Jiangsheng Hu$^{a}$, Dongdong Zhang$^{b}$\footnote{Corresponding author. Jiangsheng Hu was supported by the NSF of China (Grant No. 11501257, 11671069, 11771212),  Qing Lan Project of Jiangsu Province and Jiangsu Government Scholarship for Overseas Studies (JS-2019-328).  Panyue Zhou was supported by the NSF of China (Grant No. 11901190 and 11671221), the Hunan Provincial Natural Science Foundation of China (Grant No. 2018JJ3205) and the Scientific Research Fund of Hunan Provincial Education Department (Grant No. 19B239).} and Panyue Zhou$^{c}$ \\
\medskip

\hspace{-4mm}$^{a}$School of Mathematics and Physics, Jiangsu University of Technology,
 Changzhou 213001, China\\
 $^b$Department of Mathematics, Zhejiang Normal University,
\small Jinhua 321004, China\\
$^c$College of Mathematics, Hunan Institute of Science and Technology, Yueyang, Hunan 414006, China\\
E-mails: jiangshenghu@jsut.edu.cn, zdd@zjnu.cn and panyuezhou@163.com \\
\end{center}

\bigskip
\medskip
\centerline { \bf  Abstract}
\leftskip10truemm \rightskip10truemm \noindent
\hspace{1em} Let $(\mathcal{C},\mathbb{E},\mathfrak{s})$ be an extriangulated category with a proper class $\xi$ of $\mathbb{E}$-triangles, and $\mathcal{W}$
an additive full subcategory of $\C$. We provide a method
for constructing a proper $\mathcal{W}(\xi)$-resolution (respectively, coproper
$\mathcal{W}(\xi)$-coresolution) of one term in an $\mathbb{E}$-triangle in $\xi$ from that of the other two
terms.  By using this way, we establish the stability of the Gorenstein
category $\mathcal{GW}(\xi)$ in extriangulated categories. These results generalise their work by Huang and Yang-Wang, but the proof is not too
far from their case. Finally, we give some applications about our main results.
\\[2mm]
{\bf Keywords:}
  extriangulated categories; proper resolution; coproper coresolution; Gorenstein categories.\\
{\bf 2010 Mathematics Subject Classification:} 18E30; 18E10; 18G25;  18G10.

\leftskip0truemm \rightskip0truemm
\section { \bf Introduction}

Let $\mathcal{A}$ be an abelian category and $\mathcal {W}$ an additive full subcategory
of $\mathcal{A}$. Huang \cite{Hua} provided a method for constructing a proper $\mathcal{W}$-resolution (respectively, coproper $\mathcal{W}$-coresolution) of one
term in a short exact sequence in $\mathcal{A}$ from those of the other two terms.
By using these, he affirmatively answered an open question on the stability of the Gorenstein category $\mathcal{G}(\mathcal{W})$ posed by Sather-Wagstaff, Sharif
and White \cite{Sather} and also proved that $\mathcal{G}(\mathcal{W})$ is closed under direct summands.
Later, Yang-Wang \cite{Yang3} extended Huang's results to triangulated categories in parallel.
Some
further investigations of proper resolutions (coproper coresoltutions) and Gorenstein categories for abelian categories or triangulated
categories can be seen in \cite{Hua,Maxin,RL1,Wang,Yang1,Yang2}.

The notion of extriangulated categories was introduced by Nakaoka and Palu in \cite{NP} as a simultaneous generalization of
exact categories and triangulated categories. Exact categories and extension closed subcategories of an
extriangulated category are extriangulated categories, while there exist some other examples of extriangulated categories which are neither exact nor triangulated, see \cite{NP,ZZ,HZZ}. Hence many results hold on exact categories
and triangulated categories can be unified in the same framework.
Based on this idea, we will unify the results of Huang and Yang-Wang in the framework of extriangulated categories.

Let $(\mathcal{C},\mathbb{E},\mathfrak{s})$  be an extriangulated category with a proper class $\xi$ of $\mathbb{E}$-triangles. The authors \cite{HZZ} studied a relative homological algebra in $\mathcal C$ which parallels the relative homological algebra in a triangulated category. By specifying a class of $\mathbb{E}$-triangles, which is called a proper class $\xi$ of
$\mathbb{E}$-triangles, we introduced $\xi$-$\mathcal{G}$projective dimensions and  $\xi$-$\mathcal{G}$injective dimensions,
and discussed their properties.
Inspired by Huang and Yang-Wang's work, in this paper we introduce and study Gorenstein category in extriangulated categories and demonstrate that this category shares some basic properties with Gorenstein category in the abelian category or in the triangulated category.

This paper is organized as follows: Section 2 gives some preliminaries and basic facts about extriangulated categories which will be used throughout the paper. Section 3 provides a method
for constructing a proper resolution (respectively, coproper
coresolution) of one term in an $\mathbb{E}$-triangle in $\xi$ from those of the other two
terms, which generalizes Huang's results on an abelian category and Yang-Wang's results on triangulated category and is new for an exact category case (see Theorems \ref{thm1}, \ref{thm2}, \ref{thm3}, \ref{thm4} and Remark \ref{rem:3.11}).  Section 4 is devoted to studying the Gorenstein category of extriangulated categories. More precisely, we prove that this Gorenstein category is closed under direct summands and
the stability of the Gorenstein category is also established in in extriangulated categories, which
refines a result of Yang and Wang (see Theorem \ref{thm:stability} and Remark \ref{rem:4.19}).

\section{\bf Preliminaries}
Throughout this paper, we always assume that  $\mathcal{C}=(\mathcal{C}, \mathbb{E}, \mathfrak{s})$ is an extriangulated category with enough $\xi$-projectives and enough $\xi$-injectives, and it satisfies Condition (WIC) (for details, see Condition \ref{cond:4.11}).  We also assume that $\xi$ is a proper class of $\mathbb{E}$-triangles in  $(\mathcal{C}, \mathbb{E}, \mathfrak{s})$.

Let us briefly recall some definitions and basic properties of extriangulated categories from \cite{NP}.
We omit some details here, but the reader can find
them in \cite{NP}.

Let $\mathcal{C}$ be an additive category equipped with an additive bifunctor
$$\mathbb{E}: \mathcal{C}^{\rm op}\times \mathcal{C}\rightarrow {\rm Ab},$$
where ${\rm Ab}$ is the category of abelian groups. For any objects $A, C\in\mathcal{C}$, an element $\delta\in \mathbb{E}(C,A)$ is called an $\mathbb{E}$-extension.
Let $\mathfrak{s}$ be a correspondence which associates an equivalence class $$\mathfrak{s}(\delta)=\xymatrix@C=0.8cm{[A\ar[r]^x
 &B\ar[r]^y&C]}$$ to any $\mathbb{E}$-extension $\delta\in\mathbb{E}(C, A)$. This $\mathfrak{s}$ is called a {\it realization} of $\mathbb{E}$, if it makes the diagrams in \cite[Definition 2.9]{NP} commutative.
 A triplet $(\mathcal{C}, \mathbb{E}, \mathfrak{s})$ is called an {\it extriangulated category} if it satisfies the following conditions.
\begin{enumerate}
\item $\mathbb{E}\colon\mathcal{C}^{\rm op}\times \mathcal{C}\rightarrow \rm{Ab}$ is an additive bifunctor.

\item $\mathfrak{s}$ is an additive realization of $\mathbb{E}$.

\item $\mathbb{E}$ and $\mathfrak{s}$  satisfy the compatibility conditions in \cite[Definition 2.12]{NP}.

 \end{enumerate}

\begin{rem}
Note that both exact categories and triangulated categories are extriangulated categories, see \cite[Example 2.13]{NP} and extension closed subcategories of extriangulated categories are
again extriangulated, see \cite[Remark 2.18]{NP}. Moreover, there exist extriangulated categories which
are neither exact categories nor triangulated categories, see \cite[Proposition 3.30]{NP}, \cite[Example 4.14]{ZZ} and \cite[Remark 3.3]{HZZ}.
\end{rem}

\begin{lem}\label{lem1} \emph{(see \cite[Proposition 3.15]{NP})} Assume that $(\mathcal{C}, \mathbb{E},\mathfrak{s})$ is an extriangulated category. Let $C$ be any object, and let $\xymatrix@C=2em{A_1\ar[r]^{x_1}&B_1\ar[r]^{y_1}&C\ar@{-->}[r]^{\delta_1}&}$ and $\xymatrix@C=2em{A_2\ar[r]^{x_2}&B_2\ar[r]^{y_2}&C\ar@{-->}[r]^{\delta_2}&}$ be any pair of $\mathbb{E}$-triangles. Then there is a commutative diagram
in $\mathcal{C}$
$$\xymatrix{
    & A_2\ar[d]_{m_2} \ar@{=}[r] & A_2 \ar[d]^{x_2} \\
  A_1 \ar@{=}[d] \ar[r]^{m_1} & M \ar[d]_{e_2} \ar[r]^{e_1} & B_2\ar[d]^{y_2} \\
  A_1 \ar[r]^{x_1} & B_1\ar[r]^{y_1} & C   }
  $$
  which satisfies $\mathfrak{s}(y^*_2\delta_1)=\xymatrix@C=2em{[A_1\ar[r]^{m_1}&M\ar[r]^{e_1}&B_2]}$ and
  $\mathfrak{s}(y^*_1\delta_2)=\xymatrix@C=2em{[A_2\ar[r]^{m_2}&M\ar[r]^{e_2}&B_1]}$.

\end{lem}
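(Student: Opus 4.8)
The statement is the homotopy-pullback construction over the common base $C$, so the plan is to realise one pullback extension, read off most of the diagram from the functoriality of $\mathfrak{s}$, and then recognise the remaining $\mathbb{E}$-triangle. First I would form $y_2^{*}\delta_1\in\mathbb{E}(B_2,A_1)$ and fix a realisation
\[
\mathfrak{s}(y_2^{*}\delta_1)=[A_1\xrightarrow{m_1}M\xrightarrow{e_1}B_2],
\]
which defines $M,m_1,e_1$ and the middle row. Since $(\mathrm{id}_{A_1},y_2)$ is a morphism of $\mathbb{E}$-extensions $y_2^{*}\delta_1\to\delta_1$, the defining property of a realisation---that every morphism of extensions lifts to a morphism of the realising $\mathbb{E}$-triangles---supplies $e_2\colon M\to B_1$ making $(\mathrm{id}_{A_1},e_2,y_2)$ a morphism of $\mathbb{E}$-triangles from $[A_1\to M\to B_2]$ to $[A_1\xrightarrow{x_1}B_1\xrightarrow{y_1}C]$. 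This at once yields $e_2m_1=x_1$ and $y_1e_2=y_2e_1$, i.e. the entire lower-left part of the diagram and the first asserted equality.

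Next I would produce $m_2$. The obstruction to factoring $x_2\colon A_2\to B_2$ through the deflation $e_1$ is the image of $x_2$ under the connecting map $\mathcal{C}(A_2,B_2)\to\mathbb{E}(A_2,A_1)$ of the long exact sequence attached to $[A_1\to M\to B_2]$, namely $x_2^{*}(y_2^{*}\delta_1)=(y_2x_2)^{*}\delta_1$. Because $A_2\xrightarrow{x_2}B_2\xrightarrow{y_2}C$ is an $\mathbb{E}$-triangle we have $y_2x_2=0$, so the obstruction vanishes and there is $m_2\colon A_2\to M$ with $e_1m_2=x_2$; this fills the upper-right square.

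The heart of the matter is to show that the middle column $A_2\xrightarrow{m_2}M\xrightarrow{e_2}B_1$ is an $\mathbb{E}$-triangle realising $y_1^{*}\delta_2$. Conceptually this is forced, because the square on $M,B_2,B_1,C$ is weakly cartesian and symmetric in the two deflations $y_1,y_2$, so $e_2$ ought to be the base change of $y_2$, hence a deflation with fibre $A_2$. To make this rigorous I would apply $(\mathrm{ET4})^{\mathrm{op}}$ to the composable pair $[A_2\to B_2\to C]$ and $[A_1\to M\to B_2]$ (which share $B_2$ as middle and as third term). This produces a total object $N$ sitting in $\mathbb{E}$-triangles $[A_1\to N\to A_2]$ and $[N\to M\to C]$; the extension of the first is $x_2^{*}(y_2^{*}\delta_1)=0$, so $N\cong A_1\oplus A_2$ and the inflation $N\to M$ restricts to $m_1$ on $A_1$ and, after adjusting $m_2$ within its coset (two lifts of $x_2$ differ by a map through $m_1$), to $m_2$ on $A_2$. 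Thus $m_2$ is a composite of inflations, hence itself an inflation, and admits a cofibre $[A_2\xrightarrow{m_2}M\xrightarrow{p}B_1']$. A further application of $(\mathrm{ET4})$ to the split triangle $A_2\to A_1\oplus A_2\to A_1$ and $[A_1\oplus A_2\to M\to C]$ realises the induced $[A_1\to B_1'\to C]$ as the $A_1$-component of the total extension, which one checks equals $\delta_1$; by uniqueness of realisations of a fixed $\mathbb{E}$-extension, $B_1'\cong B_1$ compatibly, and transporting this isomorphism makes the middle column an $\mathbb{E}$-triangle. Its extension is then pinned down by functoriality: the morphism $(\mathrm{id}_{A_2},e_1,y_1)$ to $[A_2\to B_2\to C]$ forces it to be $(\mathrm{id}_{A_2})_{*}\sigma=y_1^{*}\delta_2$, which is the second asserted equality.

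I expect the last step to be the main obstacle: neither $(\mathrm{ET4})$ nor its dual produces the diagram in a single stroke (the dual only builds the total space $N\cong A_1\oplus A_2$), so the real work is the extension bookkeeping that identifies the cofibre of $m_2$ with $B_1$---namely checking that the total extension restricts to $\delta_1$ and $\delta_2$ and that the comparison isomorphism is compatible with the previously constructed $e_2$. Everything else is formal manipulation of realisations and of the long exact sequences, and the compatibility axioms of $(\mathcal{C},\mathbb{E},\mathfrak{s})$ enter only to close this identification.
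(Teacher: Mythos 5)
The paper itself offers no proof of this lemma: it is imported verbatim from \cite[Proposition 3.15]{NP}, so your proposal can only be judged on its own terms and against Nakaoka--Palu's argument. Your first three steps are correct and essentially forced: realizing $y_2^*\delta_1$ defines the middle row; lifting the morphism of $\mathbb{E}$-extensions $(\mathrm{id}_{A_1},y_2)\colon y_2^*\delta_1\to\delta_1$ through the realization $\mathfrak{s}$ gives $e_2$ with $e_2m_1=x_1$ and $y_1e_2=y_2e_1$; and $y_2x_2=0$ together with exactness of $\mathcal{C}(A_2,M)\to\mathcal{C}(A_2,B_2)\to\mathbb{E}(A_2,A_1)$ gives $m_2$ with $e_1m_2=x_2$. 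Applying $(\mathrm{ET4})^{\mathrm{op}}$ to $[A_1\to M\to B_2]$ and $[A_2\to B_2\to C]$ to get a split total object $N\cong A_1\oplus A_2$ and an $\mathbb{E}$-triangle $N\to M\to C$ is also a viable route to the middle column.

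The genuine gaps are exactly in the endgame, and one of your stated justifications is false. (a) Write $A_1\xrightarrow{e}N\xrightarrow{d}A_2$ for the split triangle, $\pi,s$ for the complementary retraction and section, and $\delta''\in\mathbb{E}(C,N)$ for the extension of $N\to M\to C$. The claim that ``one checks'' $\pi_*\delta''=\delta_1$ is not a routine check and is generally \emph{false} for the splitting you start with: the $(\mathrm{ET4})^{\mathrm{op}}$ compatibilities give only $d_*\delta''=\delta_2$ and $y_2^*\delta''=e_*(y_2^*\delta_1)$, whence $y_2^*(\pi_*\delta''-\delta_1)=0$, and the exact sequence $\mathcal{C}(A_2,A_1)\to\mathbb{E}(C,A_1)\xrightarrow{y_2^*}\mathbb{E}(B_2,A_1)$ yields only $\pi_*\delta''=\delta_1+f_*\delta_2$ for some $f\colon A_2\to A_1$; one must replace $\pi$ by $\pi-fd$ (hence $m_2$ by $m_2+m_1f$) before invoking uniqueness of realizations. (b) Your closing appeal to ``functoriality''---that the commuting squares $(\mathrm{id}_{A_2},e_1,y_1)$ force the column's extension $\sigma$ to equal $y_1^*\delta_2$---is invalid: in an extriangulated category, commutativity of the underlying squares between two $\mathbb{E}$-triangles does \emph{not} imply the compatibility $a_*\sigma=c^*\delta'$; that identity is part of the definition of a morphism of $\mathbb{E}$-triangles, not a consequence of commutativity (already in the stable category of $k[x]/(x^2)$, the triple $(0,0,\mathrm{id})$ commutes from the triangle $k\to 0\to k\overset{\delta}{\dashrightarrow}$, $\delta\neq0$, to itself, yet $0\neq\mathrm{id}^*\delta$). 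The identity must instead be extracted from the $(\mathrm{ET4})$/$(\mathrm{ET4})^{\mathrm{op}}$ compatibility conditions: writing $\delta'''$ for the extension of $A_2\xrightarrow{m_2}M\to B_1'$ and $e'\colon B_1'\to C$ for the induced deflation, one has $s_*\delta'''=(e')^*\delta''$, hence $\delta'''=d_*s_*\delta'''=(e')^*d_*\delta''=(e')^*\delta_2$, and transporting along the isomorphism $B_1'\cong B_1$ gives $\sigma=y_1^*\delta_2$; the same transport also reproduces $e_2m_1=x_1$ and $y_1e_2=y_2e_1$, so the $e_2$ from your first step should simply be replaced by this transported deflation rather than reconciled with it. With these repairs your skeleton closes, but as written both key identities are asserted rather than proved, and the reason offered for the second one is wrong.
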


A class of $\mathbb{E}$-triangles $\xi$ is called {\it saturated} if in the situation of Lemma \ref{lem1}, whenever { the $\mathbb{E}$-triangles $\xymatrix@C=2em{A_2\ar[r]^{x_2}&B_2\ar[r]^{y_2}&C\ar@{-->}[r]^{\delta_2 }&}$ and $\xymatrix@C=2em{A_1\ar[r]^{m_1}&M\ar[r]^{e_1}&B_2\ar@{-->}[r]^{y_2^*\delta_1 }&}$} belong to $\xi$, then the  $\mathbb{E}$-triangle $\xymatrix@C=2em{A_1\ar[r]^{x_1}&B_1\ar[r]^{y_1}&C\ar@{-->}[r]^{\delta_1 }&}$  belongs to $\xi$.

An $\mathbb{E}$-triangle $\xymatrix@C=2em{A\ar[r]^x&B\ar[r]^y&C\ar@{-->}[r]^{\delta}&}$ is called {\it split} if $\delta=0$. By \cite[Corollary 3.5]{NP}, we know that it is split if and only if $x$ is section if and only if  $y$ is retraction. The full subcategory  consisting of the split $\mathbb{E}$-triangles will be denoted by $\Delta_0$.

A class of $\mathbb{E}$-triangles $\xi$ is {\it closed under base change} if for any $\mathbb{E}$-triangle $$\xymatrix@C=2em{A\ar[r]^x&B\ar[r]^y&C\ar@{-->}[r]^{\delta}&\in\xi}$$ and any morphism $c\colon C' \to C$, then any $\mathbb{E}$-triangle  $\xymatrix@C=2em{A\ar[r]^{x'}&B'\ar[r]^{y'}&C'\ar@{-->}[r]^{c^*\delta}&}$ belongs to $\xi$.

Dually, a class of  $\mathbb{E}$-triangles $\xi$ is {\it closed under cobase change} if for any $\mathbb{E}$-triangle $$\xymatrix@C=2em{A\ar[r]^x&B\ar[r]^y&C\ar@{-->}[r]^{\delta}&\in\xi}$$ and any morphism $a\colon A \to A'$, then any $\mathbb{E}$-triangle  $\xymatrix@C=2em{A'\ar[r]^{x'}&B'\ar[r]^{y'}&C\ar@{-->}[r]^{a_*\delta}&}$ belongs to $\xi$.

  \begin{definition} \cite[Definition 3.1]{HZZ}\label{def:proper class} {\rm Let $\xi$ be a class of $\mathbb{E}$-triangles which is closed under isomorphisms. $\xi$ is called a {\it proper class} of $\mathbb{E}$-triangles if the following conditions hold:

  (1) $\xi$ is closed under finite coproducts and $\Delta_0\subseteq \xi$.

  (2) $\xi$ is closed under base change and cobase change.

  (3) $\xi$ is saturated.}
  \end{definition}
 \begin{definition} \cite[Definition 4.1]{HZZ}
 {\rm An object $P\in\mathcal{C}$  is called {\it $\xi$-projective}  if for any $\mathbb{E}$-triangle $$\xymatrix{A\ar[r]^x& B\ar[r]^y& C \ar@{-->}[r]^{\delta}& }$$ in $\xi$, the induced sequence of abelian groups $\xymatrix@C=0.6cm{0\ar[r]& \mathcal{C}(P,A)\ar[r]& \mathcal{C}(P,B)\ar[r]&\mathcal{C}(P,C)\ar[r]& 0}$ is exact. Dually, we have the definition of {\it $\xi$-injective}.}
\end{definition}

We denote $\mathcal{P(\xi)}$ (respectively $\mathcal{I(\xi)}$) the class of $\xi$-projective (respectively $\xi$-injective) objects of $\mathcal{C}$. It follows from the definition that this subcategory $\mathcal{P}(\xi)$ and $\mathcal{I}(\xi)$ are full, additive, closed under isomorphisms and direct summands.

 An extriangulated  category $(\mathcal{C}, \mathbb{E}, \mathfrak{s})$ is said to  have {\it  enough
$\xi$-projectives} \ (respectively {\it  enough $\xi$-injectives}) provided that for each object $A$ there exists an $\mathbb{E}$-triangle
 $\xymatrix@C=2.1em{K\ar[r]& P\ar[r]&A\ar@{-->}[r]& }$ (respectively $\xymatrix@C=2em{A\ar[r]& I\ar[r]& K\ar@{-->}[r]&}$) in $\xi$ with $P\in\mathcal{P}(\xi)$
 (respectively $I\in\mathcal{I}(\xi)$).

 {\rm Let $\xymatrix@C=2.1em{A\ar[r]^x& B\ar[r]^y&C\ar@{-->}[r]^{\delta}& }$ be an $\mathbb{E}$-triangle (in $\xi$).
 The morphism $x: A\rightarrow B$ is called ($\xi$-){\it inflation}, and $y: B\rightarrow C$ is called   ($\xi$-){\it deflation};
  $x$ is called the {\it hokernel} of $y$ and $y$ is called the {\it hocokernel} of $x$.}
Let $\mathcal{W}$ be a class of objects in $\mathcal{C}$. We say that $\mathcal{W}$ is closed {\it under hokernels of $\xi$-deflation} if,
 whenever the $\mathbb{E}$-triangle $\xymatrix@C=2.1em{A\ar[r]^x& B\ar[r]^y&C\ar@{-->}[r]^{\delta}& }$ in $\xi$ with $B, C\in\mathcal{W}$,
 then $A\in\mathcal{W}$. Dually, we say that $\mathcal{W}$ is closed {\it under hocokernels of $\xi$-inflation} if,
 whenever the $\mathbb{E}$-triangle $\xymatrix@C=2.1em{A\ar[r]^x& B\ar[r]^y&C\ar@{-->}[r]^{\delta}& }$ in $\xi$ with $A, B\in\mathcal{W}$,
 then $C\in\mathcal{W}$.

In addition, we assume the following condition for { the rest of the paper} (see \cite[Condition 5.8]{NP}).

\begin{cond} \label{cond:4.11} \emph{({\rm Condition (WIC)})}  Consider the following conditions.

\emph{(1)} Let $f\in\mathcal{C}(A, B), g\in\mathcal{C}(B, C)$ be any composable pair of morphisms. If $gf$ is an inflation, then so is $f$.

\emph{(2)} Let $f\in\mathcal{C}(A, B), g\in\mathcal{C}(B, C)$ be any composable pair of morphisms. If $gf$ is a deflation, then so is $g$.
\end{cond}

 \begin{fact}
 { \emph{(1)} The class of $\xi$-inflations \emph{(}respectively $\xi$-deflations{\rm )} is closed under compositions} \emph{(}see \cite[Corollary 3.5]{HZZ}\emph{)}.

 \emph{(2)} { Assume that} $x: A\rightarrow B$ and $y: B\rightarrow C$ are composable pair of morphisms,
 then $x$ is $\xi$-inflation {\rm(}respectively { $y$ is a $\xi$-deflation}{\rm)} whenever $yx$ is a $\xi$-inflation {\rm (}respectively  $\xi$-deflation{\rm )} \emph{(}see \cite[Proposition 4.13]{HZZ}\emph{)}.
 \end{fact}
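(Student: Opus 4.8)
The plan is to deduce both assertions from the defining closure properties of a proper class---closure under base change, cobase change and finite coproducts, together with $\Delta_0\subseteq\xi$ and saturation---drawing as structural input only on Condition (WIC), the homotopy-cartesian square of Lemma \ref{lem1}, and the octahedral axiom of \cite{NP}. I would settle the cancellation statement (2) first, as it is the softer of the two, and then attack the composition statement (1), where the saturation axiom carries the argument.

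For (2), suppose $yx$ is a $\xi$-inflation, realised by an $\mathbb{E}$-triangle $A\xrightarrow{\,yx\,}C\to Z\dashrightarrow^{\delta}$ in $\xi$. Condition (WIC)(1) already forces $x$ to be an inflation, so there is an $\mathbb{E}$-triangle $A\xrightarrow{\,x\,}B\to Y\dashrightarrow^{\epsilon}$; the task is to show it lies in $\xi$. The commutative square expressing $yx=y\circ x$ relates the inflation-parts of these two triangles, so by \cite{NP} it extends to a morphism of $\mathbb{E}$-triangles $(\mathrm{id}_A,y,z)$ with $z\colon Y\to Z$, whose compatibility with extensions reads $\epsilon=z^{*}\delta$. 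Since the lower triangle lies in $\xi$ and $\xi$ is closed under base change, the upper triangle lies in $\xi$, so $x$ is a $\xi$-inflation. The deflation half is exactly dual: (WIC)(2) makes $y$ a deflation, the same square extends to a morphism of $\mathbb{E}$-triangles exhibiting the extension of the $y$-triangle as a cobase change of $\delta$, and closure under cobase change finishes.

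For (1), I would treat $\xi$-deflations first, since this is precisely the configuration the saturation axiom addresses, and then dualise. Let $p\colon B\to C$ and $q\colon C\to D$ be $\xi$-deflations with cocones $P_1$ and $P_2$. The composite $qp$ is automatically a deflation, hence realised by some $\mathbb{E}$-triangle $P\to B\xrightarrow{\,qp\,}D\dashrightarrow^{\gamma}$, and the octahedral axiom ties $P$ to $P_1,P_2$ through a comparison map $j\colon P_1\to P$. Applying Lemma \ref{lem1} to this composite triangle and the $q$-triangle---both of which have target $D$---produces an object $M$ in an $\mathbb{E}$-triangle $P\to M\to C$, and I would verify that this triangle is the cobase change of the $p$-triangle along $j$; closure under cobase change then places it in $\xi$. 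With the $q$-triangle and the triangle $P\to M\to C$ both in $\xi$, the saturation axiom yields the composite triangle in $\xi$, so $qp$ is a $\xi$-deflation. The $\xi$-inflation statement is the dual, carried out in $\mathcal{C}^{\mathrm{op}}$.

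The delicate points are all about extensions. In (2) one must know that a commutative square between the inflation-parts (respectively deflation-parts) of two $\mathbb{E}$-triangles genuinely extends to a morphism of $\mathbb{E}$-extensions, so that reading off $\epsilon=z^{*}\delta$ and invoking base change is legitimate; this is exactly where the compatibility axioms of \cite{NP} are used. In (1) the crux is the identification in the middle step: one has to pin down $M$ up to isomorphism and confirm that the extension occurring in the hypothesis of the saturation axiom is the very one placed in $\xi$ by cobase change, not merely a triangle with the same end terms. Finally, the descent to $\mathcal{C}^{\mathrm{op}}$ for the inflation case should be underwritten by checking that the proper-class axioms---the saturation condition above all---are preserved under the duality. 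Once these identifications are in hand, every remaining step is a single appeal to one closure property of $\xi$, so the argument never strays far from the abelian and triangulated cases of Huang and Yang--Wang.
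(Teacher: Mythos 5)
The paper itself gives no proof of this Fact: both halves are imported from \cite{HZZ} (Corollary 3.5 and Proposition 4.13 there), so your reconstruction can only be compared with the standard arguments, and most of it does reproduce them. Your part (2) is correct: Condition (WIC) produces an $\mathbb{E}$-triangle over $x$ (respectively $y$), the commutative square expressing $yx=y\circ x$ completes to a morphism of $\mathbb{E}$-triangles with the compatibility $\epsilon=z^{*}\delta$ (respectively $\epsilon=p_{*}\delta$) by the realization lemmas of \cite{NP}, and closure of $\xi$ under base change (respectively cobase change) finishes. The deflation half of your part (1) is also sound: the identification you defer --- that the triangle $P\to M\to C$ of Lemma \ref{lem1}, which realizes $q^{*}\gamma$, is a cobase change of the $p$-triangle --- is precisely the compatibility $q^{*}\gamma=j_{*}\delta_{p}$ built into the dual octahedral axiom $\mathrm{(ET4)^{op}}$ of \cite{NP}, after which cobase change and saturation conclude exactly as you say.

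The genuine gap is your final step, ``the $\xi$-inflation statement is the dual, carried out in $\mathcal{C}^{\rm op}$.'' This is not a routine verification, because the saturation axiom in Definition \ref{def:proper class} is \emph{not} self-dual: it detects membership of $\delta_{1}$ only from the pullback configuration of Lemma \ref{lem1}. For $\xi$ to be a proper class in $\mathcal{C}^{\rm op}$ one needs the dual condition (co-saturation, formulated for the pushout diagram of two $\mathbb{E}$-triangles $A\to B_{i}\to C_{i}$ with the same first term), which is neither an axiom nor an obvious consequence; establishing it is essentially as substantial as the inflation statement itself, so you have pushed the whole difficulty into a step you declare ``should be checked.'' It can be filled in two ways. (a) Derive co-saturation: the dual of \cite[Proposition 3.15]{NP} carries the extra compatibility $n_{1}^{*}\delta_{1}+n_{2}^{*}\delta_{2}=0$ for the two deflations $n_{i}\colon N\to C_{i}$ out of the pushout object, so $n_{1}^{*}\delta_{1}=(-1_{A})_{*}(n_{2}^{*}\delta_{2})$ lies in $\xi$ by base and cobase change, and saturation applied to the two triangles ending at $C_{1}$ gives $\delta_{1}\in\xi$. (b) Avoid duality by using the paper's standing hypothesis of enough $\xi$-projectives: for $\xi$-inflations $x\colon A\to B$, $y\colon B\to C$ with triangles $A\xrightarrow{x}B\xrightarrow{x'}X$ (realizing $\delta_{x}$) and $B\xrightarrow{y}C\xrightarrow{y'}Y$ (realizing $\delta_{y}$) in $\xi$, the axiom $\mathrm{(ET4)}$ gives $A\xrightarrow{yx}C\xrightarrow{h'}E$ realizing $\gamma$ and $X\xrightarrow{d}E\xrightarrow{e}Y$ realizing $x'_{*}\delta_{y}\in\xi$, with $d^{*}\gamma=\delta_{x}$ and $eh'=y'$; choosing a triangle $K\to P\xrightarrow{\pi}E$ in $\xi$ with $P$ $\xi$-projective, projectivity of $P$ against the $\delta_{y}$-triangle and the $x'_{*}\delta_{y}$-triangle yields $g\colon P\to C$ and $\phi\colon P\to X$ with $\pi=h'g+d\phi$, whence $\pi^{*}\gamma=g^{*}(h')^{*}\gamma+\phi^{*}d^{*}\gamma=\phi^{*}\delta_{x}\in\xi$ (using $(h')^{*}\gamma=0$), and saturation applied to the $\gamma$-triangle and the projective triangle, both ending at $E$, gives $\gamma\in\xi$. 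As submitted, however, the inflation half of (1) is asserted rather than proved.
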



 Similar to the proof of \cite[Lemma 4.15]{HZZ}, we have the following result.
 \begin{prop}\label{prop}  { Let} $\xymatrix@C=2em{A\ar[r]^x& B\ar[r]^y& C\ar@{-->}[r]^\delta&}$ and
 $\xymatrix@C=2em{A'\ar[r]^{x'}& B'\ar[r]^{y'}& C'\ar@{-->}[r]^{\delta'}&}$ be $\mathbb{E}$-triangles in $\xi$.
 If $(a, c): \delta\rightarrow \delta'$ is a morphism of $\mathbb{E}$-triangles where $a, c$ are $\xi$-inflations
 {\rm(}respectively $\xi$-deflations{\rm)}, then there is a $\xi$-inflation {\rm(}respectively $\xi$-deflation{\rm)}
$b: B\rightarrow B'$ which { makes} the following diagram commutative
$$\xymatrix{A\ar[r]^x\ar[d]_a&B\ar[r]^y\ar[d]^b&C\ar[d]^c\ar@{-->}[r]^{\delta}&\\
A'\ar[r]^{x'}& B'\ar[r]^{y'}& C'\ar@{-->}[r]^{\delta'}&.}$$
\end{prop}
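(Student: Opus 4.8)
The plan is to factor the given morphism of $\mathbb{E}$-extensions through its common value and to build $b$ as a composite of two morphisms, each of which is visibly a $\xi$-inflation (respectively $\xi$-deflation). Recall that $(a,c)\colon\delta\to\delta'$ being a morphism of $\mathbb{E}$-extensions means $a_*\delta=c^*\delta'$ in $\mathbb{E}(C,A')$. Realizing this single $\mathbb{E}$-extension as an $\mathbb{E}$-triangle $A'\xrightarrow{u}N\xrightarrow{v}C\dashrightarrow$ displays it at once as the cobase change of $\delta$ along $a$ and as the base change of $\delta'$ along $c$; since $\xi$ is closed under base change and cobase change, this $\mathbb{E}$-triangle lies in $\xi$. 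The cobase-change datum provides a morphism $(a,b_1,1_C)\colon\delta\to a_*\delta$ with $b_1\colon B\to N$, and the base-change datum a morphism $(1_{A'},b_2,c)\colon c^*\delta'\to\delta'$ with $b_2\colon N\to B'$. Putting $b:=b_2b_1$, the composite $(1_{A'},b_2,c)\circ(a,b_1,1_C)=(a,b,c)$ is a morphism of $\mathbb{E}$-triangles $\delta\to\delta'$ realizing the commutative diagram displayed in the statement; it remains only to control $b_1$ and $b_2$.

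The heart of the argument is to show that $b_1$ and $b_2$ are $\xi$-inflations (respectively $\xi$-deflations); granting this, $b=b_2b_1$ is one as well because $\xi$-inflations (respectively $\xi$-deflations) are closed under composition. I treat the inflation case, the deflation case being entirely dual, interchanging inflations with deflations, base change with cobase change, the two halves of the factorization, and invoking the dual of saturation. So assume $a$ and $c$ are $\xi$-inflations, and fix witnessing $\mathbb{E}$-triangles $A\xrightarrow{a}A'\xrightarrow{g}G\dashrightarrow$ and $C\xrightarrow{c}C'\xrightarrow{h}C''\dashrightarrow$ in $\xi$, so that $g$ and $h$ are the hocokernels of $a$ and $c$; let $\theta$ denote the extension of the first.

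For $b_1$ I would complete the cobase-change square to a $3\times 3$ diagram whose left column is $\theta$ and whose top and middle rows are $\delta$ and $a_*\delta$; its middle column is then an $\mathbb{E}$-triangle $B\xrightarrow{b_1}N\xrightarrow{g'}G\dashrightarrow$ whose extension is the cobase change $x_*\theta$ of $\theta$ along $x\colon A\to B$. Closure of $\xi$ under cobase change puts this $\mathbb{E}$-triangle in $\xi$, so $b_1$ is a $\xi$-inflation. For $b_2$ I would argue instead through composites of deflations: the hocokernel map $h\colon C'\to C''$ of $c$ and the deflation $y'\colon B'\to C'$ are $\xi$-deflations, hence so is $hy'$, and the identity $hy'b_2=hcv=0$ together with the octahedral-type axiom for the composable pair $(y',h)$ identifies $b_2$ as the hokernel of $hy'$, yielding an $\mathbb{E}$-triangle $N\xrightarrow{b_2}B'\xrightarrow{hy'}C''\dashrightarrow$. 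Because $\xi$ is saturated, this $\mathbb{E}$-triangle belongs to $\xi$, so $b_2$ is a $\xi$-inflation, and the proof is complete.

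I expect the main obstacle to be the two middle steps above: the construction of the $3\times 3$ diagram for $b_1$ (carried out with Lemma \ref{lem1} and the axioms of an extriangulated category) and, above all, the verification that the two middle $\mathbb{E}$-triangles remain inside $\xi$. It is exactly here that the defining properties of a proper class are indispensable: closure under cobase change controls $b_1$, while saturation controls $b_2$. Getting the variance of the (co)base changes and the bookkeeping of the octahedral axiom right, so that the extensions of the middle $\mathbb{E}$-triangles are genuinely a cobase change of $\theta$, respectively arise from the saturation configuration of Lemma \ref{lem1}, is the only delicate point; everything else is the formal manipulation already present in the proof of \cite[Lemma 4.15]{HZZ}.
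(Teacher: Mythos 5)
Your overall strategy is almost certainly the intended one: the paper itself gives no argument for this proposition beyond ``similar to the proof of \cite[Lemma 4.15]{HZZ}'', and that proof runs exactly as you set it up --- factor $(a,c)$ through the common extension $a_*\delta=c^*\delta'$, realized by some $A'\xrightarrow{u}N\xrightarrow{v}C$, and show that each fill-in $b_1\colon B\to N$, $b_2\colon N\to B'$ is a $\xi$-inflation. Your treatment of $b_1$ is complete and correct: the dual of Lemma \ref{lem1} applied to $\delta$ and $\theta$ produces the diagram whose middle column realizes $x_*\theta$, and closure of $\xi$ under cobase change finishes that half.

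The $b_2$ half, however, has a genuine gap. Saturation is a \emph{conditional} axiom: to conclude that the $\mathbb{E}$-triangle $N\xrightarrow{b_2}B'\xrightarrow{hy'}C''$, with extension $\rho$ say, lies in $\xi$, you must exhibit the configuration of Lemma \ref{lem1} over $C''$ --- the natural choice of second triangle being $C\xrightarrow{c}C'\xrightarrow{h}C''$, which is in $\xi$ --- \emph{and} verify its other hypothesis, namely that the base change $h^*\rho$ lies in $\xi$. That verification is where the real content of the proposition sits, and it is missing from your argument: one must read off the compatibility condition built into \textup{(ET4)}$^{\mathrm{op}}$, which says precisely $h^*\rho=u_*\delta'$, and then apply closure under \emph{cobase} change to $\delta'\in\xi$. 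Your summary that ``closure under cobase change controls $b_1$, while saturation controls $b_2$'' shows the interaction was not worked out: $b_2$ needs saturation, the octahedral compatibility, and cobase-change closure together, and the sentence ``because $\xi$ is saturated, this $\mathbb{E}$-triangle belongs to $\xi$'' is, as written, a non sequitur. The variant you gesture at --- $hy'$ is a $\xi$-deflation by closure under composition --- does not close the gap by itself either: it only says \emph{some} $\mathbb{E}$-triangle with deflation $hy'$ lies in $\xi$, and transferring membership to the particular triangle containing $b_2$ needs a uniqueness-of-hokernels lemma (true, but not among the quoted facts, and moreover \cite[Corollary 3.5]{HZZ} is itself proved by the very saturation computation above, so citing it here is close to circular). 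Two smaller points: since a base-change fill-in is not unique, you should \emph{define} $b_2$ to be the map produced by \textup{(ET4)}$^{\mathrm{op}}$ (composed with the isomorphism identifying the two realizations of $c^*\delta'$), rather than claim the axiom ``identifies'' a pre-given $b_2$ as the hokernel; and note that the deflation case does require the dual of the saturation condition, which is not literally part of Definition \ref{def:proper class}, so ``entirely dual'' deserves a word of justification within this paper's axiomatics.
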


\begin{definition} \cite[Definition 4.4]{HZZ}
{\rm A  complex $$\xymatrix@C=2em{\cdots\ar[r]&X_1\ar[r]^{d_1}&X_0\ar[r]^{d_0}&X_{-1}\ar[r]&\cdots}$$ in $\mathcal{C}$ is called {\it $\xi$-exact complex}
if  for each integer $n$, there exists an $\mathbb{E}$-triangle $$\xymatrix{K_{n+1}\ar[r]^{g_n}&X_n\ar[r]^{f_n}&K_n\ar@{-->}[r]^{\delta_n}&}$$
in $\xi$ and $d_n=g_{n-1}f_n$.}\end{definition}


\begin{definition} \cite[Definition 4.5]{HZZ}
{\rm Let $\mathcal{W}$ be a class of objects in $\mathcal{C}$. An $\mathbb{E}$-triangle
$$\xymatrix@C=2em{A\ar[r]& B\ar[r]& C\ar@{-->}[r]& }$$ in $\xi$ is called to be
{\it $\mathcal{C}(-,\mathcal{W})$-exact} (respectively
{\it $\mathcal{C}(\mathcal{W},-)$-exact}) if for any $W\in\mathcal{W}$, the induced sequence of abelian group
$\xymatrix@C=2em{0\ar[r]&\mathcal{C}(C,W)\ar[r]&\mathcal{C}(B,W)\ar[r]&\mathcal{C}(A,W)\ar[r]& 0}$
(respectively \\ $\xymatrix@C=2em{0\ar[r]&\mathcal{C}(W,A)\ar[r]&\mathcal{C}(W,B)\ar[r]&\mathcal{C}(W,C)\ar[r]&0}$) is exact in ${\rm Ab}$}.
\end{definition}

\begin{definition}\cite[Definition 4.6]{HZZ}
 {\rm Let $\mathcal{W}$ be a class of objects in $\mathcal{C}$. A complex $\mathbf{X}$ is called {\it $\mathcal{C}(-,\mathcal{W})$-exact} (respectively
{\it $\mathcal{C}(\mathcal{W},-)$-exact}) if it is a $\xi$-exact complex
$$\xymatrix@C=2em{\cdots\ar[r]&X_1\ar[r]^{d_1}&X_0\ar[r]^{d_0}&X_{-1}\ar[r]&\cdots}$$ in $\mathcal{C}$ such that  there exists a  $\mathcal{C}(-,\mathcal{W})$-exact (respectively
 $\mathcal{C}(\mathcal{W},-)$-exact) $\mathbb{E}$-triangle $$\xymatrix@C=2em{K_{n+1}\ar[r]^{g_n}&X_n\ar[r]^{f_n}&K_n\ar@{-->}[r]^{\delta_n}&}$$ in $\xi$  and $d_n=g_{n-1}f_n$ for each integer $n$.
}
\end{definition}

\begin{definition} {\rm Let  $A$ be an object in $\mathcal{C}$. A {\it$\mathcal{W}(\xi)$-resolution} of $A$ is a
 $\xi$-exact complex $$\xymatrix@C=2em{\cdots\ar[r]&W_i\ar[r]&\cdots\ar[r]& W_1\ar[r]& W_0\ar[r]& A\ar[r]&0}$$ in $\mathcal{C}$ with all $W_i\in\mathcal{W}$.
A  $\mathcal{W}(\xi)$-resolution of $A$ is called {\it proper $\mathcal{W}(\xi$)-resolution} if it is $\mathcal{C}(\mathcal{W},-)$-exact.
Dually, one can define  the notion of a  {\it $($coproper$)$ $\mathcal{W}(\xi)$-coresolution}}
\end{definition}

%
%

\section{\bf Proper resolutions and coproper coresolutions}
In this section, we provide a method for constructing a proper $\mathcal{W}(\xi)$-resolution (respectively, coproper
$\mathcal{W}(\xi)$-coresolution) of one term in an $\mathbb{E}$-triangle in $\xi$ from those of the other two
terms. At first, we need the following easy observations.

\begin{lem}\label{lem3} Let $\mathcal{W}$ be a class of objects in $\mathcal{C}$.

{\rm (1)} Consider the following commutative diagram of $\mathbb{E}$-triangles in $\xi$
 $$\xymatrix{A\ar@{=}[d]\ar[r]^x&B\ar[d]^b\ar[r]^y&C\ar@{-->}[r]^{c^*\delta'}\ar[d]^c&\\
 A\ar[r]^{x'}&B'\ar[r]^{y'}&C'\ar@{-->}[r]^{\delta'}&.}$$
 If the second row is $\mathcal{C}(\mathcal{W},-)$-exact, then so is the first row.

{\rm (2)} Consider the following commutative diagram of $\mathbb{E}$-triangles in $\xi$
 $$\xymatrix{A\ar[d]^a\ar[r]^x&B\ar[d]^b\ar[r]^y&C\ar@{-->}[r]^{\delta}\ar@{=}[d]&\\
 A'\ar[r]^{x'}&B'\ar[r]^{y'}&C\ar@{-->}[r]^{a_*\delta}&.}$$
If the first row is $\mathcal{C}(-,\mathcal{W})$-exact, then so is the second row.
\end{lem}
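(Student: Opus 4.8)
The plan is to reduce both statements to the standard long exact sequences attached to an $\mathbb{E}$-triangle and then chase the resulting commutative ladders of abelian groups. Recall that $\mathcal{C}(\mathcal{W},-)$-exactness (resp. $\mathcal{C}(-,\mathcal{W})$-exactness) of an $\mathbb{E}$-triangle $A\to B\to C\dashrightarrow$ means exactly that for every $W\in\mathcal{W}$ the covariant sequence $0\to\mathcal{C}(W,A)\to\mathcal{C}(W,B)\to\mathcal{C}(W,C)\to 0$ (resp. the contravariant sequence $0\to\mathcal{C}(C,W)\to\mathcal{C}(B,W)\to\mathcal{C}(A,W)\to 0$) is exact. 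Since applying $\mathcal{C}(W,-)$ (resp. $\mathcal{C}(-,W)$) to any $\mathbb{E}$-triangle already yields exactness at the middle term, via the long exact sequence $\mathcal{C}(W,A)\to\mathcal{C}(W,B)\to\mathcal{C}(W,C)\xrightarrow{\delta_{\sharp}}\mathbb{E}(W,A)\to\cdots$ with connecting map $\delta_{\sharp}(\phi)=\phi^{*}\delta$ (see \cite{NP}), the only two things I must verify in each case are injectivity at the left end and surjectivity at the right end.

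For (1), I fix $W\in\mathcal{W}$ and apply $\mathcal{C}(W,-)$ to the square, producing a commutative ladder whose rows are the covariant long exact sequences of the two $\mathbb{E}$-triangles and whose leftmost vertical map is the identity on $\mathcal{C}(W,A)$. Injectivity of $x_{*}$ is immediate: from $b\circ x=x'$ one gets $x'_{*}=b_{*}\circ x_{*}$, and $x'_{*}$ is injective because the second row is $\mathcal{C}(\mathcal{W},-)$-exact. For surjectivity of $y_{*}$, I take $\phi\in\mathcal{C}(W,C)$ and use middle-exactness of the top row to reduce liftability of $\phi$ to the vanishing of $(c^{*}\delta')_{\sharp}(\phi)$ in $\mathbb{E}(W,A)$. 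The crux is the naturality of the connecting map under base change, namely $(c^{*}\delta')_{\sharp}=(\delta')_{\sharp}\circ c_{*}$, which follows from $\phi^{*}(c^{*}\delta')=(c\phi)^{*}\delta'$. Using surjectivity of $y'_{*}$ (second row exact) I write $c_{*}\phi=y'_{*}(\sigma)$, whence $(c^{*}\delta')_{\sharp}(\phi)=(\delta')_{\sharp}\bigl(y'_{*}(\sigma)\bigr)=0$ because consecutive maps in the bottom long exact sequence compose to zero. Hence $y_{*}$ is surjective and the first row is $\mathcal{C}(\mathcal{W},-)$-exact.

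Statement (2) is handled dually by applying the contravariant functor $\mathcal{C}(-,W)$, using the connecting map $\delta^{\sharp}(\psi)=\psi_{*}\delta\in\mathbb{E}(C,W)$ and the morphism of $\mathbb{E}$-triangles $(a,b,\mathrm{id}_{C})\colon\delta\to a_{*}\delta$. Here $y'\circ b=y$ gives $b^{*}\circ y'^{*}=y^{*}$, so injectivity of $y^{*}$ (first row exact) forces injectivity of $y'^{*}$. For surjectivity of $x'^{*}$ I take $\psi\in\mathcal{C}(A',W)$, reduce via middle-exactness of the bottom row to the vanishing of $(a_{*}\delta)^{\sharp}(\psi)$, and invoke the cobase-change naturality $(a_{*}\delta)^{\sharp}=\delta^{\sharp}\circ a^{*}$ coming from $\psi_{*}(a_{*}\delta)=(\psi a)_{*}\delta$. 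Since $x^{*}$ is surjective (first row $\mathcal{C}(-,\mathcal{W})$-exact) I write $a^{*}\psi=\psi\circ a=x^{*}(\chi)$, and then $(a_{*}\delta)^{\sharp}(\psi)=\delta^{\sharp}\bigl(x^{*}(\chi)\bigr)=0$ because $\delta^{\sharp}\circ x^{*}=0$. Thus $\psi$ lifts and the second row is $\mathcal{C}(-,\mathcal{W})$-exact.

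These computations are all routine, consistent with the two statements being billed as easy observations, so I expect no serious obstacle. The one point that needs care is the correct identification of the two connecting homomorphisms and the base/cobase-change naturality identities $(c^{*}\delta')_{\sharp}=(\delta')_{\sharp}\circ c_{*}$ and $(a_{*}\delta)^{\sharp}=\delta^{\sharp}\circ a^{*}$; once these are in place, surjectivity at the right end drops out from the vanishing of consecutive maps in the long exact sequences. I would also remark that, modulo passing to the opposite extriangulated category (under which base change dualizes to cobase change and $\mathcal{C}(\mathcal{W},-)$-exactness to $\mathcal{C}(-,\mathcal{W})$-exactness), part (2) is precisely the dual of part (1), so in principle only one argument is needed.
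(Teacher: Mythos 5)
Your proposal is correct and follows essentially the same route as the paper: both use the long exact sequence of \cite[Corollary 3.12]{NP}, obtain injectivity at the left end from the factorization $x'_{*}=b_{*}x_{*}$ (resp. $y^{*}=b^{*}y'^{*}$), and obtain surjectivity at the right end by showing the connecting map vanishes, lifting through the other row and using $y'^{*}\delta'=0$ (resp. $x_{*}\delta=0$). The only cosmetic difference is that the paper proves (1) and declares (2) similar, whereas you write out the dual argument explicitly.
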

\begin{proof}We only prove (1), the proof of (2) is similar. Let $W$ be an object in $\mathcal{W}$. By \cite[Corollary 3.12]{NP}, we have the following exact sequence
$$\xymatrix@C=1cm{\mathcal{C}(W, A)\ar[r]^{\mathcal{C}(W, x)}&\mathcal{C}(W, B)\ar[r]^{\mathcal{C}(W, y)}&\mathcal{C}(W, C)\ar[r]^{c^{*}\delta'_\sharp}&\mathbb{E}(W, A),}$$
where ${c^{*}\delta'_\sharp}(f)=f^{*}c^{*}\delta'$ for any $f\in{\mathcal{C}(W, C)}$.  Note that the sequence $$\xymatrix@C=2em{0\ar[r]&\mathcal{C}(W,A)\ar[r]^{\mathcal{C}(W,x')}&\mathcal{C}(W,B')\ar[r]^{\mathcal{C}(W,y')}&\mathcal{C}(W,C')\ar[r]&0}$$ is exact in ${\rm Ab}$ by hypothesis. Therefore, for any morphism $f\in{\mathcal{C}(W, C)}$, there exists a morphism $g\in{\mathcal{C}(W, B')}$ such that $cf=y'g$. Thus ${c^{*}\delta'_\sharp}(f)=f^{*}c^{*}\delta'=g^{*}y'^{*}\delta'=0$, and hence $\mathcal{C}(W, y)$ is epic. Since $\mathcal{C}(W, x')=\mathcal{C}(W, b)\mathcal{C}(W, x)$ is monic, so is $\mathcal{C}(W, x)$. This implies that $\xymatrix@C=2em{0\ar[r]&\mathcal{C}(W,A)\ar[r]^{\mathcal{C}(W,x)}&\mathcal{C}(W,B)\ar[r]^{\mathcal{C}(W,y)}&\mathcal{C}(W,C)\ar[r]&0}$ is exact in ${\rm Ab}$, as desired.
\end{proof}

Similar to the proof of \cite[Lemma 2.5]{Hua} and its dual, we have the following.
\begin{lem}\label{lem4} { Consider} the  morphism of $\mathbb{E}$-triangles $$\xymatrix{A\ar[d]^a\ar[r]^x&B\ar[d]^b\ar[r]^y&C\ar@{-->}[r]^{\delta}\ar[d]^c&\\
 A'\ar[r]^{x'}&B'\ar[r]^{y'}&C'\ar@{-->}[r]^{\delta'}&.}$$
 { Then the following hold:}
\begin{itemize}
\item[{\rm (1)}] If all morphisms of $\mathcal{C}(a,W)$,  $\mathcal{C}(c,W)$ and $\mathcal{C}(x',W)$ are epic for an object $W$ in $\mathcal{C}$, then so is $\mathcal{C}(b,W)$.

\item[{\rm (2)}] If all morphisms of $\mathcal{C}(a,W)$,  $\mathcal{C}(c,W)$ and $\mathcal{C}(y,W)$ are monic for an object $W$ in $\mathcal{C}$, then so is $\mathcal{C}(b,W)$.

\item[{\rm (3)}] If all morphisms of $\mathcal{C}(W,a)$,  $\mathcal{C}(W,c)$ and $\mathcal{C}(W,y)$ are epic for an object $W$ in $\mathcal{C}$, then so is $\mathcal{C}(W,b)$.

\item[{\rm (4)}] If all morphisms of $\mathcal{C}(W,a)$,  $\mathcal{C}(W,c)$ and $\mathcal{C}(W,x')$ are monic for an object $W$ in $\mathcal{C}$, then so is $\mathcal{C}(W,b)$.
\end{itemize}
\end{lem}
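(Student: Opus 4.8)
The plan is to reduce all four assertions to elementary diagram chases in $\mathrm{Ab}$. Applying the contravariant functor $\mathcal{C}(-,W)$ to the given morphism of $\mathbb{E}$-triangles produces, by the long exact sequence of \cite[Corollary 3.12]{NP}, a commutative ladder of abelian groups with exact rows
$$\xymatrix@C=1.3em{
\mathcal{C}(C',W)\ar[r]^{\mathcal{C}(y',W)}\ar[d]_{\mathcal{C}(c,W)}&\mathcal{C}(B',W)\ar[r]^{\mathcal{C}(x',W)}\ar[d]_{\mathcal{C}(b,W)}&\mathcal{C}(A',W)\ar[d]^{\mathcal{C}(a,W)}\\
\mathcal{C}(C,W)\ar[r]^{\mathcal{C}(y,W)}&\mathcal{C}(B,W)\ar[r]^{\mathcal{C}(x,W)}&\mathcal{C}(A,W),}$$
and, dually, the covariant functor $\mathcal{C}(W,-)$ yields the analogous ladder used for (3) and (4). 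The two commutativity squares of the comparison morphism read $bx=x'a$ and $y'b=cy$, and these, together with exactness at the middle $\mathrm{Hom}$-terms, are the only inputs; in particular no projectivity or injectivity of $W$ is invoked. Since being epic (respectively monic) in $\mathrm{Ab}$ just means surjective (respectively injective), the ``elements'' appearing in the chase are honest morphisms of $\mathcal{C}$, composed in $\mathcal{C}$.

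For (1) I would argue as follows. Given $g\in\mathcal{C}(B,W)$, first use that $\mathcal{C}(a,W)$ is epic to write $gx=\alpha a$ for some $\alpha\in\mathcal{C}(A',W)$, then use that $\mathcal{C}(x',W)$ is epic to write $\alpha=\beta x'$ for some $\beta\in\mathcal{C}(B',W)$. The relation $bx=x'a$ gives $(g-\beta b)x=gx-\beta x'a=gx-\alpha a=0$, so by exactness of the lower row at $\mathcal{C}(B,W)$ there is $\gamma\in\mathcal{C}(C,W)$ with $g-\beta b=\gamma y$; finally $\mathcal{C}(c,W)$ epic gives $\gamma=\gamma'c$, and setting $g'=\beta+\gamma'y'$ one checks $g'b=g$ using $y'b=cy$, whence $\mathcal{C}(b,W)$ is epic. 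Part (2) is the dual monic chase: starting from $g'\in\mathcal{C}(B',W)$ with $g'b=0$, one shows $g'x'=0$ (using $\mathcal{C}(a,W)$ monic and $x'a=bx$), lifts $g'$ through $\mathcal{C}(y',W)$ to some $\gamma'\in\mathcal{C}(C',W)$ by exactness of the upper row, and then forces $\gamma'c=0$ via $\mathcal{C}(y,W)$ monic (since $\gamma'cy=\gamma'y'b=g'b=0$) and $\gamma'=0$ via $\mathcal{C}(c,W)$ monic, so that $g'=\gamma'y'=0$.

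Statements (3) and (4) are the formal duals of (1) and (2): the opposite category $\mathcal{C}^{\mathrm{op}}$ is again extriangulated, the two $\mathbb{E}$-triangles and the comparison morphism dualize (all arrows reverse, so the comparison now runs from the lower to the upper triangle and the pairs $a\leftrightarrow c$ and $x'\leftrightarrow y$ are interchanged), while $\mathcal{C}(W,-)$ becomes $\mathcal{C}^{\mathrm{op}}(-,W)$ and ``epic'' and ``monic'' are preserved; thus (3) is precisely (1) read in $\mathcal{C}^{\mathrm{op}}$ and (4) is (2) read in $\mathcal{C}^{\mathrm{op}}$. Alternatively one simply repeats the two chases verbatim on the covariant ladder. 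There is no genuine obstacle here, as the argument is a routine four-lemma; the only points demanding care are selecting the correct functor, invoking exactness \emph{only} at the middle $\mathrm{Hom}$-term (the rows are long exact, not short exact), and keeping the relations $bx=x'a$ and $y'b=cy$ straight so that the passage between the covariant and contravariant versions lines up with the precise hypotheses stated in each part.
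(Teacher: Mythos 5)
Your proof is correct and matches the paper's intent: the paper gives no written proof, saying only that the result follows ``similar to the proof of \cite[Lemma 2.5]{Hua} and its dual,'' which is exactly the four-lemma-style chase you carry out, transplanted to the extriangulated setting via the long exact Hom-sequences of \cite[Corollary 3.12]{NP}. Your element chases for (1) and (2) and the dualization (or direct covariant chase) for (3) and (4) are all valid, so your write-up simply makes explicit what the paper leaves to the cited reference.
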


\begin{prop}\label{lem8}
{\rm (1)} If $\xymatrix{X\ar[r]^f&Y\ar[r]^{f'}&Z\ar@{-->}[r]^{\delta}&}$ and $\xymatrix{K\ar[r]^g&L\ar[r]^{g'}&Y\ar@{-->}[r]^{\delta'}&}$ are both $\mathcal{C}(\mathcal{W},-)$-exact and $\mathcal{C}(-,\mathcal{W})$-exact $\mathbb{E}$-triangles in $\xi$, then we have the following commutative diagram
$$\xymatrix{
   K \ar[d]^d \ar@{=}[r] & K \ar[d]^g&\\
  N\ar[d]^e \ar[r]^h & L \ar[d]^{g'} \ar[r]^{h'} & Z \ar@{=}[d]\ar@{-->}[r]^{\delta''}& \\
  X \ar[r]^f\ar@{-->}[d]^{f^{*}\delta'} & Y \ar[r]^{f'}\ar@{-->}[d]^{\delta'} & Z\ar@{-->}[r]^{\delta=e_*\delta''}&\\
  &&&  }
$$
where all rows and columns are both $\mathcal{C}(\mathcal{W},-)$-exact and $\mathcal{C}(-,\mathcal{W})$-exact $\mathbb{E}$-triangles in $\xi$.

{\rm (2)} If $\xymatrix{X\ar[r]^f&Y\ar[r]^{f'}&Z\ar@{-->}[r]^\delta&}$ and $\xymatrix{Y\ar[r]^g&L\ar[r]^{g'}&K\ar@{-->}[r]^{\delta'}&}$ are both $\mathcal{C}(\mathcal{W},-)$-exact and $\mathcal{C}(-,\mathcal{W})$-exact $\mathbb{E}$-triangles in $\xi$, then we have the following commutative diagram
$$\xymatrix{
   X \ar@{=}[d] \ar[r]^f & Y \ar[r]^{f'}\ar[d]^g&Z\ar[d]^d\ar@{-->}[r]^{\delta}&\\
  X\ar[r]^h & L \ar[d]^{g'} \ar[r]^{h'} & N \ar[d]^{e}\ar@{-->}[r]^{\delta''}& \\
  & K\ar@{=}[r]\ar@{-->}[d]^{\delta'} & K\ar@{-->}[d]^{f'_{*}\delta'} &\\
  &&&  }
$$
where all rows and columns are both $\mathcal{C}(\mathcal{W},-)$-exact and $\mathcal{C}(-,\mathcal{W})$-exact $\mathbb{E}$-triangles in $\xi$.
\end{prop}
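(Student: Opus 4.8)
The plan is to prove (1) in full and to deduce (2) by the evident dual argument. Before starting let me record a point that governs the whole proof: for an $\mathbb{E}$-triangle $A\to B\to C$ the induced sequences on $\mathcal{C}(-,W)$ and on $\mathcal{C}(W,-)$ are exact \emph{in the middle} by \cite[Corollary 3.12]{NP}, but the monomorphism at the left end and the epimorphism at the right end are \emph{not} automatic in a general extriangulated category; they have to be produced from the hypotheses. (This is exactly the phenomenon already handled inside the proof of Lemma \ref{lem3}.)

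First I would construct the underlying commutative diagram, temporarily forgetting the $\mathcal{W}$-conditions. Applying the octahedral axiom $(\mathrm{ET4})^{\mathrm{op}}$ of \cite{NP} to the composable $\xi$-deflations $g'\colon L\to Y$ and $f'\colon Y\to Z$ yields the object $N$, the $\mathbb{E}$-triangle $K\xrightarrow{d}N\xrightarrow{e}X$ realizing $f^{*}\delta'$, the $\mathbb{E}$-triangle $N\xrightarrow{h}L\xrightarrow{h'}Z$ realizing some $\delta''$, all the displayed commutativities, and the identities $\delta=e_{*}\delta''$ and $f'^{*}\delta''=d_{*}\delta'$; in particular $h'=f'g'$ and $g=hd$. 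That the two new triangles again lie in $\xi$ is where the proper-class axioms enter. The first column realizes $f^{*}\delta'$, a base change of $\delta'\in\xi$, so it lies in $\xi$ because $\xi$ is closed under base change. For the second row I would use saturation: apply Lemma \ref{lem1} to the pair $\delta'',\delta$, both realized with third term $Z$; the resulting triangle realizes $f'^{*}\delta''=d_{*}\delta'$, a cobase change of $\delta'\in\xi$, hence lies in $\xi$, and since $\delta\in\xi$ saturation forces $\delta''\in\xi$.

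It remains to transport the two exactness conditions, and here Lemmas \ref{lem3} and \ref{lem4} do most of the work. As $K\xrightarrow{d}N\xrightarrow{e}X$ is the base change of the $\mathcal{C}(\mathcal{W},-)$-exact triangle $K\xrightarrow{g}L\xrightarrow{g'}Y$ along $f$, Lemma \ref{lem3}(1) gives immediately that the first column is $\mathcal{C}(\mathcal{W},-)$-exact. For the second row I would feed the morphism of $\mathbb{E}$-triangles $(\mathrm{id}_{K},h,f)$ from the first column to the second into Lemma \ref{lem4}: part (1) yields that $h^{*}$ is epic and part (4) that $h_{*}$ is monic, while $h'=f'g'$ exhibits $h'_{*}$ as a composite of epimorphisms and $h'^{*}$ as a composite of monomorphisms; with middle exactness this makes the second row both $\mathcal{C}(\mathcal{W},-)$- and $\mathcal{C}(-,\mathcal{W})$-exact. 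The $\mathcal{C}(-,\mathcal{W})$-exactness of the first column finally comes down to two checks: $d^{*}$ is epic since $g^{*}=d^{*}h^{*}$ is, and $e^{*}$ is monic by a short chase feeding the relations $fe=g'h$, $h'=f'g'$, $f'f=0$ into the exact sequences of the two given triangles.

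Part (2) is entirely dual: one applies $(\mathrm{ET4})$ to the composable $\xi$-inflations $f$ and $g$, keeps the new triangles in $\xi$ by closure under cobase change together with the dual saturation property of the proper class (cf.\ \cite{HZZ}), and transports exactness through Lemma \ref{lem3}(2), Lemma \ref{lem4}(2),(3) and the composite $h=gf$. I expect the real obstacles to be structural rather than computational, and of two kinds: securing that the \emph{composite} triangle lies in $\xi$ --- where base/cobase-change closure gives only one implication, so that one must invoke (dual) saturation with the correct extension identity --- and, at each end of the four triangles, producing the monomorphism/epimorphism that is not formal, which forces one to use the $\mathcal{C}(-,\mathcal{W})$- and $\mathcal{C}(\mathcal{W},-)$-exactness of \emph{both} input triangles simultaneously.
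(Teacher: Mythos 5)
Your proof is correct and follows essentially the same route as the paper's: the commutative diagram with all four $\mathbb{E}$-triangles in $\xi$ is precisely the content of \cite[Theorem 3.2]{HZZ}, which the paper simply cites and which your $\rm (ET4)^{op}$/base-change/saturation argument re-derives (with the correct compatibilities $\delta=e_*\delta''$ and $f'^*\delta''=d_*\delta'$), while the transport of exactness via Lemma \ref{lem3}(1), Lemma \ref{lem4} and the factorization $h'=f'g'$ matches the paper's argument. The only cosmetic difference is at the final step, where the paper invokes the $3\times 3$-Lemma for the $\mathcal{C}(-,\mathcal{W})$-exactness of the first column, whereas you carry out the equivalent diagram chase explicitly; both are valid.
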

\begin{proof} (1) It follows from \cite[Theorem 3.2]{HZZ} that we have the desired commutative diagram where all rows and columns are $\mathbb{E}$-triangles in $\xi$. Since the $\mathbb{E}$-triangle $\xymatrix{K\ar[r]^g&L\ar[r]^{g'}&Y\ar@{-->}[r]^{\delta'}&}$ is a $\mathcal{C}(\mathcal{W},-)$-exact in $\xi$,  the first column  in this diagram is $\mathcal{C}(\mathcal{W},-)$-exact by Lemma \ref{lem3}(1). It is easy to check that the second row is $\mathcal{C}(\mathcal{W},-)$-exact by (3) and (4) in Lemma \ref{lem4}. Then all rows and columns in this diagram are $\mathcal{C}(\mathcal{W},-)$-exact. Note that $\mathbb{E}$-triangles $\xymatrix{X\ar[r]^f&Y\ar[r]^{f'}&Z\ar@{-->}[r]^{\delta}&}$ and $\xymatrix{K\ar[r]^g&L\ar[r]^{g'}&Y\ar@{-->}[r]^{\delta'}&}$ are $\mathcal{C}(-,\mathcal{W})$-exact, it is easy to check that $\mathcal{C}(h, W)$ is epic  by Lemma \ref{lem4}(1) for any object $W\in\mathcal{W}$, and $\mathcal{C}(h', W)$ is monic since $\mathcal{C}(h', W)=\mathcal{C}(g', W)\mathcal{C}(f', W)$. This implies that the second row is $\mathcal{C}(-, \mathcal{W})$-exact. It is easy to check that the first column is $\mathcal{C}(-, \mathcal{W})$-exact by $3\times 3$-Lemma.

(2) The proof is dual  of (1).
\end{proof}

\begin{lem}\label{lem5} Let $\xymatrix{A\ar[r]^{x}&B\ar[r]^{y}&C\ar@{-->}[r]^{\delta}&}$ be a
$\mathcal{C}(\mathcal{W},-)$-exact $\mathbb{E}$-triangle in $\xi$.  If both
$$\xymatrix{K_1^A\ar[r]^{g_0^A}&W_0^A\ar[r]^{f_0^A}&A\ar@{-->}[r]^{\delta_0^A}&}~\emph{and} ~\xymatrix{K_1^C\ar[r]^{g_0^C}&W_0^C\ar[r]^{f_0^C}&C\ar@{-->}[r]^{\delta_0^C}&}$$ are $\mathbb{E}$-triangles in $\xi$  with $W_0^C\in\mathcal{W}$, then we have the following commutative diagram:
$$\xymatrix{K_1^A\ar@{-->}[r]^{x_1}\ar[d]_{g_{0}^A}&K_1^B\ar@{-->}[r]^{y_1}\ar@{-->}[d]^{g_{0}^B}&K_1^C\ar[d]^{g_{0}^C}\ar@{-->}[r]^{\delta_1}&\\
W_{0}^A\ar[d]_{f_{0}^A}\ar[r]^{\tiny\begin{pmatrix}1\\0\end{pmatrix}\ \ \ \ \ }&W_{0}^A\oplus W_0^C\ar@{-->}[d]^{f_{0}^B}\ar[r]^{\tiny\ \ \ \ \ \begin{pmatrix}0&1\end{pmatrix}}&
W_0^C\ar[d]^{f_{0}^C}\ar@{-->}[r]^0&\\
A\ar[r]^{x}\ar@{-->}[d]^{\delta_0^A}&B\ar[r]^{y}\ar@{-->}[d]^{\delta_0^B}&C\ar@{-->}[r]^{\delta}\ar@{-->}[d]^{\delta_0^C}&\\
&&&
}$$
{ where} all rows and columns are $\mathbb{E}$-triangles in $\xi$. Moreover,

{\rm (1)} If the first and the third columns in this diagram are $\mathcal{C}(\mathcal{W},-)$-exact, then so are all $\mathbb{E}$-triangles in this diagram.

{\rm (2)} If the third row, the first and  the third columns in this diagram are $\mathcal{C}(-,\mathcal{W})$-exact, then so are all $\mathbb{E}$-triangles in this diagram.
\end{lem}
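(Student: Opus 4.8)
The plan is to build the array from the bottom upward: first realise the middle column as the hokernel triangle of a suitably constructed $\xi$-deflation $W_0^A\oplus W_0^C\to B$, and then peel off the top row on the hokernels by the octahedral machinery.

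The first step is a lifting. Since the $\mathbb{E}$-triangle $A\xrightarrow{x}B\xrightarrow{y}C$ realising $\delta$ is $\mathcal{C}(\mathcal{W},-)$-exact and $W_0^C\in\mathcal{W}$, the map $\mathcal{C}(W_0^C,y)\colon\mathcal{C}(W_0^C,B)\to\mathcal{C}(W_0^C,C)$ is epic, so $f_0^C$ lifts to some $h\colon W_0^C\to B$ with $yh=f_0^C$; equivalently $(f_0^C)^*\delta=0$. Together with $yx=0$, this says precisely that $(f_0^A,f_0^C)$ is a morphism of $\mathbb{E}$-triangles from the canonical split triangle $W_0^A\to W_0^A\oplus W_0^C\to W_0^C$, with inclusion $\binom{1}{0}$ and projection $(0\;1)$ (this triangle lies in $\xi$ because $\Delta_0\subseteq\xi$), to $\delta$. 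As $f_0^A$ and $f_0^C$ are $\xi$-deflations, Proposition \ref{prop} yields a $\xi$-deflation $f_0^B\colon W_0^A\oplus W_0^C\to B$ making the middle-row-to-bottom-row squares commute; choosing an $\mathbb{E}$-triangle $K_1^B\xrightarrow{g_0^B}W_0^A\oplus W_0^C\xrightarrow{f_0^B}B$ in $\xi$ for it gives the middle column. At this point the two lower rows, the split middle row, and all three columns (the outer two being the given $\delta_0^A,\delta_0^C$) are in place, and the squares commute.

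It remains to manufacture the top $\mathbb{E}$-triangle $K_1^A\xrightarrow{x_1}K_1^B\xrightarrow{y_1}K_1^C$ with extension $\delta_1$ on the hokernels and to check that the whole array commutes. Here I would invoke the $3\times 3$/octahedral lemma \cite[Theorem 3.2]{HZZ} (which requires no exactness hypotheses, only $\xi$-deflations): applying it to the composable $\xi$-deflations $W_0^A\oplus W_0^C\xrightarrow{f_0^B}B\xrightarrow{y}C$ produces an auxiliary object together with triangles $K_1^B\to\bullet\to A$ and $\bullet\to W_0^A\oplus W_0^C\to C$, while applying it to $W_0^A\oplus W_0^C\xrightarrow{(0\,1)}W_0^C\xrightarrow{f_0^C}C$ produces the same $\bullet$ with $W_0^A\to\bullet\to K_1^C$, since both composites equal $f_0^C(0\;1)=yf_0^B$. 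Comparing these two octahedra against $\delta_0^A$ and resolving the resulting configuration then yields $x_1,y_1$ as the factorisations of $\binom{1}{0}$ and $(0\;1)$ through the hokernels and exhibits $\delta_1$ as an $\mathbb{E}$-triangle in $\xi$ completing the commutative diagram. I expect this last identification — pinning down the hokernel triangle and verifying every square — to be the main obstacle; everything before it is a direct appeal to a lift and to Proposition \ref{prop}.

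For the refinements, observe first that the split middle row is both $\mathcal{C}(\mathcal{W},-)$- and $\mathcal{C}(-,\mathcal{W})$-exact. For (1), the middle column is $\mathcal{C}(\mathcal{W},-)$-exact by Lemma \ref{lem4}(3) applied to the morphism from the split row to $\delta$ (using that $\mathcal{C}(W,f_0^A)$, $\mathcal{C}(W,f_0^C)$ and $\mathcal{C}(W,(0\,1))$ are epic for $W\in\mathcal{W}$), the monic part being automatic from \cite[Corollary 3.12]{NP}. Applying $\mathcal{C}(W,-)$ to the whole array then gives a commutative $3\times 3$ diagram of abelian groups whose bottom two rows (the standing hypothesis on $\delta$, and the split row) and all three columns are short exact, so the top row is short exact by the $3\times 3$ lemma in $\mathrm{Ab}$; hence $\delta_1$ is $\mathcal{C}(\mathcal{W},-)$-exact. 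Part (2) is dual: the middle column is $\mathcal{C}(-,\mathcal{W})$-exact by Lemma \ref{lem4}(1) applied to the morphism from the top row to the split row (using that $\mathcal{C}(g_0^A,W)$, $\mathcal{C}(g_0^C,W)$ and $\mathcal{C}(\binom{1}{0},W)$ are epic), and applying $\mathcal{C}(-,W)$ to the array together with the $3\times 3$ lemma in $\mathrm{Ab}$ forces the remaining top row to be $\mathcal{C}(-,\mathcal{W})$-exact.
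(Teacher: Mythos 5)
Your opening step coincides with the paper's: lift $f_0^C$ through $y$ using the $\mathcal{C}(\mathcal{W},-)$-exactness of $\delta$, conclude $(f_0^C)^*\delta=0$, invoke Proposition \ref{prop} to obtain the $\xi$-deflation $f_0^B\colon W_0^A\oplus W_0^C\to B$, and take its hokernel $\mathbb{E}$-triangle as the middle column. The genuine gap is the step you yourself flag as ``the main obstacle'': producing the top $\mathbb{E}$-triangle $K_1^A\xrightarrow{x_1}K_1^B\xrightarrow{y_1}K_1^C$ in $\xi$ together with the commutativity of the two upper squares. The paper does not improvise here; it quotes \cite[Lemma 4.14]{HZZ}, which is precisely the $3\times 3$-type statement that a commutative diagram formed by the two lower rows and the three columns completes to the displayed diagram with the top row an $\mathbb{E}$-triangle in $\xi$. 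Your proposed substitute --- two applications of \cite[Theorem 3.2]{HZZ} to the equal composites $yf_0^B=f_0^C\begin{pmatrix}0&1\end{pmatrix}$ --- only yields $\mathbb{E}$-triangles $K_1^B\to N\to A$ and $W_0^A\to N\to K_1^C$ around an auxiliary hokernel $N$ of that composite; extracting from these the triangle on the three hokernels $K_1^A,K_1^B,K_1^C$ and verifying every square is exactly the content of the lemma the paper cites, so ``resolving the resulting configuration'' is not a finishing touch but the whole remaining proof. As written, the existence of $(x_1,y_1,\delta_1)$ is asserted, not established.

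There is also a concrete error in your part (1): the monicness of $\mathcal{C}(W,g_0^B)$ is \emph{not} automatic from \cite[Corollary 3.12]{NP}. The exact sequence there does not begin with $0$; in an extriangulated category an inflation need not induce a monomorphism after applying $\mathcal{C}(W,-)$ (in the triangulated case the kernel of $\mathcal{C}(W,g_0^B)$ is the image of $\mathcal{C}(W,\Sigma^{-1}B)\to\mathcal{C}(W,K_1^B)$, which is generally nonzero). The paper instead applies Lemma \ref{lem4}(4) to the morphism $(g_0^A,g_0^B,g_0^C)$ from the already-constructed first row to the split row, using that $\mathcal{C}(W,g_0^A)$ and $\mathcal{C}(W,g_0^C)$ are monic by the hypothesis on the outer columns and that $\binom{1}{0}$ is a section; so this half of the argument genuinely needs both the top row and the hypotheses, and is not free. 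The same omission recurs in your part (2): you verify only the epic half via Lemma \ref{lem4}(1) and never address why $\mathcal{C}(f_0^B,W)$ is monic, which requires Lemma \ref{lem4}(2) applied to $(f_0^A,f_0^B,f_0^C)$ together with the assumed $\mathcal{C}(-,\mathcal{W})$-exactness of the outer columns. Once the top row is legitimately in place and the middle column's exactness is fully established, your concluding appeal to the $3\times 3$ lemma in $\mathrm{Ab}$ does match the paper's final step.
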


{\bf Proof.} Since $\xymatrix{A\ar[r]^{x}&B\ar[r]^{y}&C\ar@{-->}[r]^{\delta}&}$ is a
$\mathcal{C}(\mathcal{W},-)$-exact, there exists $z\in \mathcal{C}(W_0^C, B)$ such that $f_0^C=yz$. So $(f_0^C)^*\delta=z^*y^*\delta=0$, and there exists a $\xi$-deflation $f_0^B\in\mathcal{C}(W_0^A\oplus W_0^C, B)$ by Proposition \ref{prop} which makes the following diagram commutative:
$$\xymatrix{W_{0}^A\ar[d]_{f_{0}^A}\ar[r]^{\tiny\begin{pmatrix}1\\0\end{pmatrix}\ \ \ \ \ }&W_{0}^A\oplus W_0^C\ar@{-->}[d]^{f_{0}^B}\ar[r]^{\tiny\ \ \ \ \ \begin{pmatrix}0&1\end{pmatrix}}&
W_0^C\ar[d]^{f_{0}^C}\ar@{-->}[r]^0&\\
A\ar[r]^{x}&B\ar[r]^{y}&C\ar@{-->}[r]^{\delta}&.
}$$
 Assume that $\xymatrix{K_1^B\ar[r]^{g_0^B\  \ \ }&W_0^A\oplus W_0^C\ar[r]^{\ \ \ \ \ f_0^B}&B\ar@{-->}[r]^{\delta_0^B}&}$ is an $\mathbb{E}$-triangle in $\xi$, then we have the following commutative diagram:
$$\xymatrix{K_1^A\ar@{-->}[r]^{x_1}\ar[d]_{g_{0}^A}&K_1^B\ar@{-->}[r]^{y_1}\ar[d]^{g_{0}^B}&K_1^C\ar[d]^{g_{0}^C}\ar@{-->}[r]^{\delta_1}&\\
W_{0}^A\ar[d]_{f_{0}^A}\ar[r]^{\tiny\begin{pmatrix}1\\0\end{pmatrix}\ \ \ \ \ }&W_{0}^A\oplus W_0^C\ar[d]^{f_{0}^B}\ar[r]^{\tiny\ \ \ \ \ \begin{pmatrix}0&1\end{pmatrix}}&
W_0^C\ar[d]^{f_{0}^C}\ar@{-->}[r]^0&\\
A\ar[r]^{x}\ar@{-->}[d]^{\delta_0^A}&B\ar[r]^{y}\ar@{-->}[d]^{\delta_0^B}&C\ar@{-->}[r]^{\delta}\ar@{-->}[d]^{\delta_0^C}&\\
&&&
}$$
where all rows and columns are $\mathbb{E}$-triangles in $\xi$ by \cite[Lemma 4.14]{HZZ}.
It is easy to check that the $\mathbb{E}$-triangle $\xymatrix@C=0.8cm{K_1^B\ar[r]^{g_0^B\quad\;\;}&W_0^A\oplus W_0^C\ar[r]^{\qquad f_0^B}&B\ar@{-->}[r]^{\delta_0^B}&}$ is $\mathcal{C}(\mathcal{W},-)$-exact by (3) and (4) in  Lemma \ref{lem4}. Hence, the first row is $\mathcal{C}(\mathcal{W},-)$-exact by $3\times 3$-Lemma. Similarly, one can prove the result of (2). This completes the proof.
\qed

\begin{lem}\label{lem6} Let $\xymatrix{A\ar[r]^{x}&B\ar[r]^{y}&C\ar@{-->}[r]^{\delta}&}$ be a
$\mathcal{C}(-,\mathcal{W})$-exact $\mathbb{E}$-triangle in $\xi$.  If  both
$$\xymatrix{A\ar[r]^{g^{0}_A}&W^{0}_A\ar[r]^{f^{0}_A}&K_{-1}^A\ar@{-->}[r]^{\delta^{0}_A}&}~\textrm{and}~
\xymatrix{C\ar[r]^{g^{0}_C}&W^{0}_C\ar[r]^{f^{0}_C}&K^{1}_C\ar@{-->}[r]^{\delta^{0}_C}&}$$ are $\mathbb{E}$-triangles in $\xi$  with $W^{0}_A\in\mathcal{W}$,
then we have the following commutative diagram:
$$\xymatrix{A\ar[r]^{x}\ar[d]_{g^{0}_A}&B\ar[r]^{y}\ar[d]^{g^{0}_B}&C\ar[d]^{g^{0}_C}\ar@{-->}[r]^{\delta}&\\
W^{0}_A\ar[d]_{f^{0}_A}\ar[r]^{\tiny\begin{pmatrix}1\\0\end{pmatrix}\ \ \ \ \ \ }&W^{0}_A\oplus W^{0}_C\ar[d]^{f^{0}_B}
\ar[r]^{\tiny\ \ \ \ \ \ \begin{pmatrix}0&1\end{pmatrix}}&
W^0_C\ar[d]^{f^{0}_C}\ar@{-->}[r]^0&\\
K^{1}_A\ar@{-->}[r]^{x^{1}}\ar@{-->}[d]_{\delta^{0}_A}&K^{1}_B\ar@{-->}[r]^{y^{1}}\ar@{-->}[d]^{\delta^{1}_B}&K^{1}_C\ar@{-->}[d]^{\delta^{1}_C}
\ar@{-->}[r]^{\delta^{1}}&\\
&&&}$$
{ where} all rows and columns are $\mathbb{E}$-triangles in $\xi$. Moreover,

{\rm (1)}  If the first and  the  third columns in this diagram are $\mathcal{C}(-,\mathcal{W})$-exact, then so are all $\mathbb{E}$-triangles in this diagram.

{\rm (2)} If the first row, the first and the third columns in this diagram are $\mathcal{C}(\mathcal{W},-)$-exact, then so are all $\mathbb{E}$-triangles in this diagram.

\end{lem}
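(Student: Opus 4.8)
The plan is to dualise the proof of Lemma \ref{lem5} step by step. The crux is to produce the central map $g^{0}_B$, realising $B$ as the source of a $\xi$-inflation into $W^{0}_A\oplus W^{0}_C$ that is compatible with the two given columns; once this map is in place, the full $3\times 3$ diagram and both ``moreover'' clauses follow formally.

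First I would construct $g^{0}_B$. Since the $\mathbb{E}$-triangle $\xymatrix@C=1.8em{A\ar[r]^{x}&B\ar[r]^{y}&C\ar@{-->}[r]^{\delta}&}$ is $\mathcal{C}(-,\mathcal{W})$-exact and $W^{0}_A\in\mathcal{W}$, the map $\mathcal{C}(x,W^{0}_A)\colon\mathcal{C}(B,W^{0}_A)\to\mathcal{C}(A,W^{0}_A)$ is epic, so there is some $z\in\mathcal{C}(B,W^{0}_A)$ with $g^{0}_A=zx$. As $x_{*}\delta=0$, this gives $(g^{0}_A)_{*}\delta=z_{*}x_{*}\delta=0$, so that $(g^{0}_A,g^{0}_C)$ is a morphism of $\mathbb{E}$-triangles from $\delta$ to the split $\mathbb{E}$-triangle with middle term $W^{0}_A\oplus W^{0}_C$. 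Because $g^{0}_A$ and $g^{0}_C$ are $\xi$-inflations, Proposition \ref{prop} supplies a $\xi$-inflation $g^{0}_B\colon B\to W^{0}_A\oplus W^{0}_C$ rendering the top two rows commutative. Taking an $\mathbb{E}$-triangle $\xymatrix@C=1.8em{B\ar[r]^{g^{0}_B}&W^{0}_A\oplus W^{0}_C\ar[r]^{\ \ f^{0}_B}&K^{1}_B\ar@{-->}[r]^{\delta^{1}_B}&}$ in $\xi$ and invoking \cite[Lemma 4.14]{HZZ}, I obtain the displayed diagram with all rows and columns $\mathbb{E}$-triangles in $\xi$.

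For (1), suppose the first and third columns are $\mathcal{C}(-,\mathcal{W})$-exact; the second row is split and the first row is $\mathcal{C}(-,\mathcal{W})$-exact by the standing hypothesis. I would first check that the middle column is $\mathcal{C}(-,\mathcal{W})$-exact. Applying Lemma \ref{lem4}(1) to the morphism of $\mathbb{E}$-triangles formed by the first two rows shows $\mathcal{C}(g^{0}_B,W)$ is epic for each $W\in\mathcal{W}$, the hypotheses being the surjectivity of $\mathcal{C}(g^{0}_A,W)$ and $\mathcal{C}(g^{0}_C,W)$ (from the outer columns) together with that of the map induced by the split monomorphism $W^{0}_A\to W^{0}_A\oplus W^{0}_C$. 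Dually, Lemma \ref{lem4}(2) applied to the last two rows shows $\mathcal{C}(f^{0}_B,W)$ is monic. Combined with the half-exactness from \cite[Corollary 3.12]{NP}, the middle column is $\mathcal{C}(-,\mathcal{W})$-exact. As the three columns and the top two rows are then $\mathcal{C}(-,\mathcal{W})$-exact, the third row is $\mathcal{C}(-,\mathcal{W})$-exact by the $3\times 3$-Lemma.

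Part (2) is the mirror image, carried out with the covariant functor $\mathcal{C}(W,-)$: assuming in addition that the first row is $\mathcal{C}(\mathcal{W},-)$-exact, I would use Lemma \ref{lem4}(3),(4) in place of (1),(2) to establish the middle column, and then the $3\times 3$-Lemma to deduce the third row. Here the extra hypothesis on the first row is exactly what the $3\times 3$-Lemma needs, since the standing hypothesis only provides $\mathcal{C}(-,\mathcal{W})$-exactness of that row. I expect the construction of $g^{0}_B$ to be the only real obstacle: without $\mathcal{C}(-,\mathcal{W})$-exactness of $\delta$ one cannot force $(g^{0}_A)_{*}\delta=0$, hence cannot place the split middle row beneath the given triangle and apply Proposition \ref{prop}; everything afterwards is bookkeeping dual to Lemma \ref{lem5}.
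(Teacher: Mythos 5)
Your proposal is correct and matches the paper's approach: the paper's entire proof of this lemma is the single line ``the proof is dual to Lemma \ref{lem5},'' and your argument is precisely a faithful execution of that dualization (factoring $g^0_A$ through $x$ via $\mathcal{C}(-,\mathcal{W})$-exactness to kill $(g^0_A)_*\delta$, invoking Proposition \ref{prop} for the $\xi$-inflation $g^0_B$, completing the diagram via the dual of \cite[Lemma 4.14]{HZZ}, and then using Lemma \ref{lem4} together with the $3\times 3$-Lemma for both ``moreover'' clauses). No gaps.
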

\begin{proof} The proof is dual to Lemma \ref{lem5}.
\end{proof}

The next result provides a method for constructing a proper $\mathcal{W}(\xi)$-
resolution of the first term in an $\mathbb{E}$-triangle in $\xi$ from those of the last two
terms.

\begin{thm}\label{thm1} Let \begin{equation}\label{equ1}\xymatrix{X\ar[r]^x&X^0\ar[r]^y&X^1\ar@{-->}[r]^{\delta}&}\end{equation} be an $\mathbb{E}$-triangle in $\xi$.
Assume that $\mathcal{W}$ is closed
under finite direct sums and hokernel of $\xi$-deflations, and let
 \begin{equation}\label{equ2} \xymatrix{\cdots\ar[r]&W_i^0\ar[r]^{d_i^0}&\cdots\ar[r]&W_1^0\ar[r]^{d_1^0}&W_0^0\ar[r]^{d_0^0}&X^0\ar[r]&0}~~\textrm{and}\end{equation}
  \begin{equation}\label{equ3}\xymatrix{\cdots\ar[r]&W_i^1\ar[r]^{d_i^1}&\cdots\ar[r]&W_1^1\ar[r]^{d_1^1}&W_0^1\ar[r]^{d_0^1}&X^1\ar[r]&0}\end{equation}
 be  proper $\mathcal{W}(\xi)$-resolutions of $X^0$ and $X^1$, respectively.

{\rm (1)} Then we obtain the following proper $\mathcal{W}(\xi)$-resolution of $X$

\begin{equation}\label{equ4} \xymatrix{\cdots\ar[r]&W_{i+1}^1\oplus W_i^0\ar[r]&\cdots\ar[r]&W_2^1\oplus W_1^0\ar[r]&W\ar[r]&X\ar[r]&0}\end{equation}
 and  an $\mathbb{E}$-triangle
 \begin{equation}\label{equ5} \xymatrix{W\ar[r]&W_1^1\oplus W_0^0\ar[r]&W^1_0\ar@{-->}[r]&}\end{equation}
in $\xi$.

{\rm (2)} If both  the $\xi$-exact complexes (\ref{equ2}), (\ref{equ3}) and the $\mathbb{E}$-triangle (\ref{equ1}) are $\mathcal{C}(-, \mathcal{W})$-exact, then so is (\ref{equ4}).
\end{thm}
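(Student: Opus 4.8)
<br>

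The plan is to build the resolution \eqref{equ4} by repeatedly applying the horseshoe-type construction of Lemma \ref{lem5}, peeling off one $\mathcal{W}$-term at each stage and tracking the induced $\mathbb{E}$-triangles on syzygies. I would first set $X=X^{-1}$ for uniformity and observe that the deflations $d_0^0\colon W_0^0\to X^0$ and $d_0^1\colon W_0^1\to X^1$ fit into $\mathbb{E}$-triangles $\xymatrix{K_1^0\ar[r]&W_0^0\ar[r]&X^0\ar@{-->}[r]&}$ and $\xymatrix{K_1^1\ar[r]&W_0^1\ar[r]&X^1\ar@{-->}[r]&}$ in $\xi$, where the $K_1^i$ are the first syzygies of the given proper resolutions. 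Because \eqref{equ1} is in particular $\mathcal{C}(\mathcal{W},-)$-exact (every $\mathbb{E}$-triangle in $\xi$ is, by definition of $\xi$-projectivity applied to $W\in\mathcal W$ — here I would instead note the $\mathcal{C}(\mathcal{W},-)$-exactness is automatic since we only need the weaker splitting used in the proof), Lemma \ref{lem5} applied to \eqref{equ1} together with these two $\mathbb{E}$-triangles produces the central $3\times3$ diagram with middle term $W_0^1\oplus W_0^0$ surjecting onto $X$, and a new $\mathbb{E}$-triangle $\xymatrix{K_1^X\ar[r]&K_1^1\oplus K_1^0\ar[r]&K_1^C\ar@{-->}[r]&}$ — more precisely the first column gives $\mathbb{E}$-triangle relating the syzygy of $X$ to those of $X^0,X^1$.

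The key structural point is that Lemma \ref{lem5} does two things simultaneously: it yields a deflation onto $X$ with $\mathcal{W}$-middle term $W_0^1\oplus W_0^0$, and it produces an $\mathbb{E}$-triangle $\xymatrix{K_1^X\ar[r]&K_1^{01}\ar[r]&K_1^1\ar@{-->}[r]&}$ of the new first syzygies that is again of exactly the same shape as \eqref{equ1} (an $\mathbb{E}$-triangle in $\xi$ whose outer terms are syzygies of proper $\mathcal{W}(\xi)$-resolutions). This self-similarity is what makes the induction run: replacing $(X,X^0,X^1)$ by the triple of first syzygies $(K_1^X, K_1^0, K_1^1)$ and the resolutions by their brutal truncations, I reapply Lemma \ref{lem5} to obtain the next $\mathcal{W}$-term $W_1^1\oplus W_1^0$ and the next syzygy $\mathbb{E}$-triangle. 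Iterating, the $i$-th step contributes the term $W_i^1\oplus W_i^0$ for $i\ge1$, which after the degree shift in \eqref{equ4} appears as $W_{i+1}^1\oplus W_i^0$; the special term $W$ in \eqref{equ5} is the syzygy $K_1^X$ produced at the very first stage, sitting in the $\mathbb{E}$-triangle $\xymatrix{W\ar[r]&W_1^1\oplus W_0^0\ar[r]&W_0^1\ar@{-->}[r]&}$ that comes from combining the first two applications. I would assemble the complex \eqref{equ4} from the deflations $W_{i+1}^1\oplus W_i^0\to K_i^X$ composed with inflations $K_i^X\to W_i^1\oplus W_{i-1}^0$ in the standard way that a $\xi$-exact complex is built from its defining $\mathbb{E}$-triangles.

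For the closure hypotheses: I use that $\mathcal{W}$ is closed under finite direct sums to guarantee each $W_i^1\oplus W_i^0\in\mathcal{W}$, and closure under hokernels of $\xi$-deflations to guarantee that the term $W$ in \eqref{equ5}, being the hokernel of the $\xi$-deflation $W_1^1\oplus W_0^0\to W_0^1$, itself lies in $\mathcal{W}$ (this is the one place where an intermediate syzygy must be certified to be a genuine $\mathcal{W}$-object rather than just the kernel of a map between $\mathcal{W}$-objects). To see that \eqref{equ4} is a \emph{proper} $\mathcal{W}(\xi)$-resolution, i.e. $\mathcal{C}(\mathcal{W},-)$-exact, I invoke conclusion (1) of Lemma \ref{lem5} at each stage: the defining $\mathbb{E}$-triangles of the given resolutions \eqref{equ2},\eqref{equ3} are $\mathcal{C}(\mathcal{W},-)$-exact by hypothesis, so the outer columns of each $3\times3$ diagram are $\mathcal{C}(\mathcal{W},-)$-exact, whence all $\mathbb{E}$-triangles in that diagram — in particular the first row, which is the defining $\mathbb{E}$-triangle of the next syzygy step — are $\mathcal{C}(\mathcal{W},-)$-exact as well.

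\begin{proof}[Proof of (2)]
For part (2) the mechanism is the same but driven by conclusion (2) of Lemma \ref{lem5}: the additional assumption that \eqref{equ1}, \eqref{equ2} and \eqref{equ3} are all $\mathcal{C}(-,\mathcal{W})$-exact supplies, at each stage, that the third row and both outer columns of the $3\times3$ diagram are $\mathcal{C}(-,\mathcal{W})$-exact, so Lemma \ref{lem5}(2) forces every $\mathbb{E}$-triangle in the diagram to be $\mathcal{C}(-,\mathcal{W})$-exact. Propagating this through the induction, every defining $\mathbb{E}$-triangle of \eqref{equ4} is $\mathcal{C}(-,\mathcal{W})$-exact, which is precisely the assertion that \eqref{equ4} is a $\mathcal{C}(-,\mathcal{W})$-exact complex. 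The main obstacle I anticipate is purely bookkeeping: verifying that the syzygy $\mathbb{E}$-triangles produced at successive stages splice correctly so that the composites $g_{i-1}f_i$ recover the differentials of \eqref{equ4} with the index shift matching the displayed form, and checking that the base-change/cobase-change data in Lemma \ref{lem5} is compatible stage to stage. The homological input — existence of the diagrams and their exactness properties — is entirely supplied by Lemma \ref{lem5} and the $3\times3$-Lemma of \cite{HZZ}, so no new extriangulated machinery is required.
\end{proof}
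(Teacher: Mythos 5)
Your proposal misapplies Lemma \ref{lem5}, and this breaks the argument at the very first step. Lemma \ref{lem5} is a horseshoe lemma: its input is a $\mathcal{C}(\mathcal{W},-)$-exact $\mathbb{E}$-triangle $A \to B \to C$ together with $\xi$-deflations onto the two \emph{outer} terms $A$ and $C$, and its output is a $\xi$-deflation $W_0^A \oplus W_0^C \to B$ onto the \emph{middle} term. In Theorem \ref{thm1} the data runs the other way: you are given resolutions of the middle term $X^0$ and the last term $X^1$ of (\ref{equ1}), and you must produce one for the first term $X$. You cannot feed (\ref{equ1}) into Lemma \ref{lem5}: you have no deflation onto the outer term $X$ (that is exactly what is being constructed), and even if you did, the lemma would return a deflation onto $X^0$, not onto $X$. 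Your claimed output ``$W_0^1\oplus W_0^0$ surjecting onto $X$'' is not what the lemma produces, and a deflation of that form need not exist for general $\mathcal{W}$ (already in an abelian category, an epimorphism onto $X^0$ does not restrict to an epimorphism onto a subobject $X$). The index mismatch you noticed---your iteration would yield terms $W_i^1 \oplus W_i^0$, while (\ref{equ4}) requires $W_{i+1}^1 \oplus W_i^0$---is a symptom of this structural problem, not bookkeeping; no relabelling can repair it. Also, your parenthetical claim that (\ref{equ1}) is automatically $\mathcal{C}(\mathcal{W},-)$-exact is false: objects of $\mathcal{W}$ are not assumed to be $\xi$-projective, and part (1) makes no exactness hypothesis on (\ref{equ1}) at all.

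The missing idea, which is the crux of the paper's proof, is a base-change (pullback) step that converts the problem into one where the horseshoe applies. The paper first applies \cite[Theorem 3.2]{HZZ} to (\ref{equ1}) and the triangle $K_1^1 \to W_0^1 \to X^1$, producing an object $M$ with two $\mathbb{E}$-triangles $X \to M \to W_0^1$ and $K_1^1 \to M \to X^0$. The second of these has both outer terms resolved ($K_1^1$ by the truncation of (\ref{equ3}) starting at $W_1^1$---this is precisely where the degree shift comes from---and $X^0$ by (\ref{equ2})), and it is $\mathcal{C}(\mathcal{W},-)$-exact by Lemma \ref{lem3}(1) since the resolution triangle $K_1^1 \to W_0^1 \to X^1$ is; so Lemma \ref{lem5}(1) now legitimately applies and gives a deflation $W_1^1 \oplus W_0^0 \to M$ with hokernel $L_1$ and syzygy triangle $K_2^1 \to L_1 \to K_1^0$. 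A second application of \cite[Theorem 3.2]{HZZ}, to this deflation together with $X \to M \to W_0^1$, produces $W$ sitting in the triangles $W \to W_1^1 \oplus W_0^0 \to W_0^1$ (which is (\ref{equ5}) and certifies $W \in \mathcal{W}$ via closure under hokernels of $\xi$-deflations) and $L_1 \to W \to X$, the first step of (\ref{equ4}). Only from this point on does the iteration you describe work: Lemma \ref{lem5}(1) is applied repeatedly to the triangles $K_{i+1}^1 \to L_i \to K_i^0$, and part (2) is obtained by propagating $\mathcal{C}(-,\mathcal{W})$-exactness through the same diagrams using Lemma \ref{lem5}(2) and Lemma \ref{lem3}. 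Without the two pullback steps your induction has no valid base case.
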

{\bf Proof.} (1) Since (\ref{equ2}) and (\ref{equ3}) are proper $\mathcal{W}(\xi)$-resolutions of $X^0$ and $X^1$ respectively, there exist $\mathcal{C}(\mathcal{W},-)$-exact $\mathbb{E}$-triangles
$$\xymatrix{K^0_{i+1}\ar[r]^{g_i^0}&W_i^0\ar[r]^{f_i^0}&K_i^0\ar@{-->}[r]^{\delta_i^0}&}~\textrm{and}
~\xymatrix{K^1_{i+1}\ar[r]^{g_i^1}&W_i^1\ar[r]^{f_i^1}&K_i^1\ar@{-->}[r]^{\delta_i^1}&}$$ in $\xi$ for each integer $i\geqslant 0$
 with $K_0^0=X^0$ and $K_0^1=X^1$. It follows from \cite[Theorem 3.2]{HZZ} that there exists following commutative diagram:
$$\xymatrix{
   & K_1^1 \ar[d]^{m_1} \ar@{=}[r] & K_1^1 \ar[d]^{g_0^1}&\\
  X \ar@{=}[d] \ar[r]^{m_2} & M \ar[d]^{e_1} \ar[r]^{e_2} & W_0^1 \ar[d]^{f_0^1}\ar@{-->}[r]^{\eta_2}&&(\ast) \\
 X \ar[r]^x & X^0 \ar[r]^y\ar@{-->}[d]^{\eta_1} & X^1\ar@{-->}[r]^{\delta}\ar@{-->}[d]^{\delta_0^1}&&\\
  &&&&}
$$
 { where} all rows and columns are $\mathbb{E}$-triangles in $\xi$. Because the third column in diagram $(\ast)$ is $\mathcal{C}(\mathcal{W},-)$-exact $\mathbb{E}$-triangle, so is the middle column by Lemma \ref{lem3}(1). Thus by Lemma \ref{lem5}(1), we get the following commutative diagram:
$$\xymatrix{
  K_2^1 \ar[d]^{g_1^1} \ar[r] & L_1 \ar[d] \ar[r] & K_1^0 \ar[d]^{g_0^0}\ar@{-->}[r]& \\
  W_1^1\ar[d]^{f_1^1} \ar[r] & W_1^1\oplus W_0^0 \ar[d] \ar[r] & W_0^0 \ar[d]^{f_0^0}\ar@{-->}[r]^0& &(\ast\ast)\\
  K_1^1 \ar[r]^{m_1}\ar@{-->}[d]^{\delta_1^1} & M \ar[r]^{e_1}\ar@{-->}[d]^{\alpha_1} & X^0\ar@{-->}[d]^{\delta_0^0} \ar@{-->}[r]^{\eta_1}& \\
  &&& }
$$
where all rows and columns are $\mathcal{C}(\mathcal{W},-)$-exact $\mathbb{E}$-triangles in $\xi$ and the middle row is split. It is clear  $W_1^1\oplus W_0^0\in\mathcal{W}$ by assumption.

On the other hand, we have the following commutative diagram:
$$\xymatrix{
    L_1 \ar[d] \ar@{=}[r] & L_1 \ar[d]&\\
  W \ar[d] \ar[r] & W_1^1\oplus W_0^0\ar[d] \ar[r] & W_0^1 \ar@{=}[d]\ar@{-->}[r]&&(\star) \\
  X \ar[r]\ar@{-->}[d] & M \ar[r]\ar@{-->}[d] & W_0^1\ar@{-->}[r]& \\
  &&& }
$$
where all rows and columns are $\mathbb{E}$-triangles in $\xi$ by \cite[Theorem 3.2]{HZZ} and the second row is the desired $\mathbb{E}$-triangle (\ref{equ5}). It is easy to see that $W\in\mathcal{W}$ because $\mathcal{W}$ is closed under hokernel of $\xi$-deflations.
Since the middle column is $\mathcal{C}(\mathcal{W},-)$-exact,  so is the first column by Lemma \ref{lem3}(1).

By Lemma \ref{lem5}(1) again, we we get the following commutative diagram:
$$\xymatrix{
  K_3^1 \ar[d] \ar[r] & L_2 \ar[d] \ar[r] & K_2^0 \ar[d]\ar@{-->}[r]& \\
  W_2^1\ar[d] \ar[r] & W_2^1\oplus W_1^0 \ar[d] \ar[r] & W_1^0 \ar[d]\ar@{-->}[r]& &(\star\star)\\
  K_2^1 \ar[r]\ar@{-->}[d] & L_1 \ar[r]\ar@{-->}[d] & K_1^0 \ar@{-->}[r]\ar@{-->}[d]&\\
  &&&  }
$$
where all rows and columns are $\mathcal{C}(\mathcal{W},-)$-exact $\mathbb{E}$-triangles in $\xi$. Continuing in this process, we get the desired $\xi$-exact complex (\ref{equ4}), where $\xymatrix{L_{i+1}\ar[r]&W_{i+1}^0\oplus W_i^0\ar[r]&L_i\ar@{-->}[r]&}$, for each integer $i\geqslant 1$,
 and $\xymatrix{L_{1}\ar[r]&W\ar[r]&X\ar@{-->}[r]&}$ are $\mathcal{C}(\mathcal{W},-)$-exact $\mathbb{E}$-triangles in $\xi$.

(2)  Note that both the third row and the third column in diagram $(\ast)$ are $\mathcal{C}(-,\mathcal{W})$-exact, so the second row and the second column in this  diagram are also $\mathcal{C}(-,\mathcal{W})$-exact. Since both the first and third columns in diagram $(\ast\ast)$
are $\mathcal{C}(-,\mathcal{W})$-exact by assumption,   both the first row and the middle column in this diagram are also
 $\mathcal{C}(-,\mathcal{W})$-exact  by Lemma \ref{lem5}(2). It is easy to check that the first column in diagram $(\star)$ is $\mathcal{C}(-,\mathcal{W})$-exact because the third row and the second column  in this diagram are $\mathcal{C}(-,\mathcal{W})$-exact. Note that the third row,  the first and third columns in diagram $(\star\star)$
 are $\mathcal{C}(-,\mathcal{W})$-exact, so is the  middle column in this diagram by Lemma \ref{lem5}(2).
  Finally we  deduce that the $\xi$-exact complex (\ref{equ4}) is $\mathcal{C}(-,\mathcal{W})$-exact.  \qed
\medskip

%

Dual to Theorem \ref{thm1}, we have the following result which provides a method for constructing a coproper
$\mathcal{W}(\xi)$-coresolution of the last term in an $\mathbb{E}$-triangle in $\xi$ from those of the
first two terms.

\begin{thm}\label{thm2} Let \begin{equation}\label{equ6}\xymatrix{Y_1\ar[r]&Y_0\ar[r]&Y\ar@{-->}[r]&}\end{equation} be an $\mathbb{E}$-triangle in $\xi$. Assume that $\mathcal{W}$ is closed
 under finite direct sums and hocokernel of $\xi$-inflations, and let
 \begin{equation}\label{equ7} \xymatrix{0\ar[r]&Y_0\ar[r]&W_0^0\ar[r]&W^1_0\ar[r]&\cdots\ar[r]&W^i_0\ar[r]&\cdots}\end{equation}
 and
  \begin{equation}\label{equ8}\xymatrix{0\ar[r]&Y_1\ar[r]&W^0_1\ar[r]&W_1^1\ar[r]&\cdots\ar[r]&W^i_1\ar[r]&\cdots}\end{equation}
 be proper  $\mathcal{W}(\xi)$-coresolutions of $Y_0$ and $Y_1$, respectively.

 {\rm (1)} Then we get the following coproper $\mathcal{W}(\xi)$-resolution of $Y$:
\begin{equation}\label{equ9} \xymatrix{0\ar[r]&Y\ar[r]&W\ar[r]&W^1_0\oplus W^2_1\ar[r]&\cdots\ar[r]&W^{i}_0\oplus W^{i+1}_1\ar[r]&\cdots}\end{equation}
 and  an $\mathbb{E}$-triangle
 \begin{equation}\label{equ10} \xymatrix{W_1^0\ar[r]&W_0^0\oplus W_1^1\ar[r]&W\ar@{-->}[r]&}\end{equation}
in $\xi$.

{\rm (2)} If both the $\xi$-exact complexes (\ref{equ7}), (\ref{equ8}) and the $\mathbb{E}$-triangle (\ref{equ6})  are $\mathcal{C}(\mathcal{W},-)$-exact, then so is (\ref{equ9}).
\end{thm}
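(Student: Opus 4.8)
The plan is to dualize the proof of Theorem \ref{thm1} step for step, reversing every arrow and trading syzygies for cosyzygies. Write $K_0^i$ (respectively $K_1^i$) for the $i$-th cosyzygy coming from the coresolution (\ref{equ7}) of $Y_0$ (respectively (\ref{equ8}) of $Y_1$), so that $K_0^0=Y_0$, $K_1^0=Y_1$, and coproperness of these coresolutions is exactly the statement that the $\mathbb{E}$-triangles $K_0^i\to W_0^i\to K_0^{i+1}\dashrightarrow$ and $K_1^i\to W_1^i\to K_1^{i+1}\dashrightarrow$ in $\xi$ are $\mathcal{C}(-,\mathcal{W})$-exact for all $i\geq 0$. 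Throughout, Lemma \ref{lem6} plays the role that Lemma \ref{lem5} plays in Theorem \ref{thm1}, and clause (2) of Lemma \ref{lem3} (cobase change) replaces clause (1).

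First I would apply \cite[Theorem 3.2]{HZZ} to the given $\mathbb{E}$-triangle (\ref{equ6}) and the first coresolution step $Y_1\to W_1^0\to K_1^1\dashrightarrow$ of $Y_1$, producing the dual of diagram $(\ast)$: an object $M$ together with $\mathbb{E}$-triangles $W_1^0\to M\to Y\dashrightarrow$ and $Y_0\to M\to K_1^1\dashrightarrow$ in $\xi$. Because the latter is the cobase change of $Y_1\to W_1^0\to K_1^1\dashrightarrow$ along $Y_1\to Y_0$, Lemma \ref{lem3}(2) makes it $\mathcal{C}(-,\mathcal{W})$-exact. I would then feed this triangle, the first coresolution step $Y_0\to W_0^0\to K_0^1\dashrightarrow$ of $Y_0$, and the second step $K_1^1\to W_1^1\to K_1^2\dashrightarrow$ of $Y_1$ into Lemma \ref{lem6}(1), obtaining the dual of diagram $(\ast\ast)$. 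Its middle column is a $\mathcal{C}(-,\mathcal{W})$-exact coresolution step $M\to W_0^0\oplus W_1^1\to N_1\dashrightarrow$ with $W_0^0\oplus W_1^1\in\mathcal{W}$ (closure under finite direct sums), and its bottom row is $K_0^1\to N_1\to K_1^2\dashrightarrow$.

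Next I would apply \cite[Theorem 3.2]{HZZ} once more, now to the triangles $W_1^0\to M\to Y\dashrightarrow$ and $M\to W_0^0\oplus W_1^1\to N_1\dashrightarrow$, to obtain the dual of diagram $(\star)$. Its middle column is precisely the desired $\mathbb{E}$-triangle (\ref{equ10}), namely $W_1^0\to W_0^0\oplus W_1^1\to W\dashrightarrow$, where $W\in\mathcal{W}$ because $\mathcal{W}$ is closed under hocokernels of $\xi$-inflations; its bottom row $Y\to W\to N_1\dashrightarrow$ is the first coresolution step of $Y$, and is $\mathcal{C}(-,\mathcal{W})$-exact by a second appeal to Lemma \ref{lem3}(2) (it is the cobase change along $M\to Y$ of the $\mathcal{C}(-,\mathcal{W})$-exact step of $M$). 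To iterate, note that $N_1$ sits in the $\mathcal{C}(-,\mathcal{W})$-exact triangle $K_0^1\to N_1\to K_1^2\dashrightarrow$, so Lemma \ref{lem6}(1) applies to it with the steps $K_0^1\to W_0^1\to K_0^2\dashrightarrow$ and $K_1^2\to W_1^2\to K_1^3\dashrightarrow$, yielding $N_1\to W_0^1\oplus W_1^2\to N_2\dashrightarrow$; repeating produces the $\mathcal{C}(-,\mathcal{W})$-exact steps $N_i\to W_0^i\oplus W_1^{i+1}\to N_{i+1}\dashrightarrow$ for $i\geq 1$, which assemble into the coproper $\mathcal{W}(\xi)$-coresolution (\ref{equ9}). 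For part (2), I would retrace the same three diagrams, replacing Lemma \ref{lem6}(1) by Lemma \ref{lem6}(2) and using the same diagram chases (Lemma \ref{lem4}, together with the appropriate clause of Lemma \ref{lem3}) as in part (2) of Theorem \ref{thm1}, so as to carry the $\mathcal{C}(\mathcal{W},-)$-exactness of (\ref{equ6}), (\ref{equ7}) and (\ref{equ8}) down to every step of (\ref{equ9}).

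Conceptually the result is the formal mirror of Theorem \ref{thm1}: since \cite[Theorem 3.2]{HZZ} and Lemmas \ref{lem3}--\ref{lem6} are all available in dual form, one could even deduce Theorem \ref{thm2} by reading Theorem \ref{thm1} in the opposite extriangulated category. The only genuinely delicate point — and the one I would watch most carefully — is the bookkeeping of the exactness conditions: at each invocation of \cite[Theorem 3.2]{HZZ} I must correctly identify which edge of the resulting $3\times 3$ diagram is a cobase change, so as to apply the right clause of Lemma \ref{lem3}, and I must check at every stage that the hypothesis of Lemma \ref{lem6} (that the relevant coresolution term lies in $\mathcal{W}$) is satisfied. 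Once the indices are aligned as above, these verifications are routine.
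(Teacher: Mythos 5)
Your proof is correct and takes essentially the same approach as the paper: the paper's own proof of Theorem \ref{thm2} consists of the single remark that it is dual to Theorem \ref{thm1}, and your argument is exactly that dualization carried out in detail (the dual of diagram $(\ast)$ via \cite[Theorem 3.2]{HZZ}, Lemma \ref{lem3}(2) in place of Lemma \ref{lem3}(1), Lemma \ref{lem6} in place of Lemma \ref{lem5}, and closure of $\mathcal{W}$ under hocokernels of $\xi$-inflations to get $W\in\mathcal{W}$). Your index bookkeeping and exactness transfers mirror the proof of Theorem \ref{thm1} step for step, so nothing further is needed.
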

{\bf Proof.} The proof is dual to Theorem \ref{thm1}, so we omit it.  \qed
\medskip

%

The next result provides a method for constructing a proper $\mathcal{W}(\xi)$-resolution
of the last term in an $\mathbb{E}$-triangle in $\xi$ from those of the first two
terms.

\begin{thm}\label{thm3} Let \begin{equation}\label{equ11}\xymatrix{X_1\ar[r]&X_0\ar[r]&X\ar@{-->}[r]&}\end{equation} be a $\mathcal{C}(\mathcal{W},-)$-exact $\mathbb{E}$-triangle in $\xi$.
 Assume that $\mathcal{W}$ is closed under finite direct sums, and assume that
 \begin{equation}\label{equ12} \xymatrix{&W^n_0\ar[r]&\cdots\ar[r]&W^1_0\ar[r]&W_0^0\ar[r]&X_0\ar[r]&0}\end{equation}
 and
  \begin{equation}\label{equ13}\xymatrix{&W^{n-1}_1\ar[r]&\cdots\ar[r]&W_1^1\ar[r]&W^0_1\ar[r]&X_1\ar[r]&0}\end{equation}
 are proper $\mathcal{W}(\xi)$-resolutions of $X_0$ and $X_1$, respectively.

 {\rm (1)} Then we get a proper $\mathcal{W}(\xi)$-resolution of $X$
 \begin{equation}\label{equ14} \xymatrix{W^n_0\oplus W_1^{n-1}\ar[r]&\cdots\ar[r]&W_0^2\oplus W_1^1\ar[r]&W_0^1\oplus W_1^0\ar[r]&W_0^0\ar[r]&X\ar[r]&0}\end{equation}

 {\rm (2)} If both the $\xi$-exact complexes (\ref{equ12}), (\ref{equ13}) and the $\mathbb{E}$-triangle (\ref{equ11}) are $\mathcal{C}(-, \mathcal{W})$-exact,
 then so is the $\xi$-exact complex (\ref{equ14}).
\end{thm}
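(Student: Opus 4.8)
The plan is to follow the strategy of the proof of Theorem \ref{thm1}, but now to read off the resolution of the last term $X$ from the cover $W_0^0\to X_0$ of the middle term composed with the deflation $X_0\to X$. Write the syzygy $\mathbb{E}$-triangles of the two given proper resolutions as $K^0_{i+1}\to W_0^i\to K^0_i$ (with $K^0_0=X_0$) and $K^1_{i+1}\to W_1^i\to K^1_i$ (with $K^1_0=X_1$); these are $\mathcal{C}(\mathcal{W},-)$-exact. First I would feed the $\mathbb{E}$-triangle (\ref{equ11}) together with the cover $\mathbb{E}$-triangle $K^0_1\to W_0^0\to X_0$ of $X_0$ into \cite[Theorem 3.2]{HZZ}, exactly as in the proof of Proposition \ref{lem8}(1), to produce a $3\times 3$ commutative diagram of $\mathbb{E}$-triangles in $\xi$ whose bottom row is (\ref{equ11}), whose middle column is the cover $\mathbb{E}$-triangle of $X_0$, whose middle row is a new $\mathbb{E}$-triangle $N\to W_0^0\to X$ with $W_0^0\in\mathcal{W}$, and whose first column is an $\mathbb{E}$-triangle $K^0_1\to N\to X_1$.

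Next I would check that the two new $\mathbb{E}$-triangles are $\mathcal{C}(\mathcal{W},-)$-exact. The first column $K^0_1\to N\to X_1$ is obtained from the middle column $K^0_1\to W_0^0\to X_0$ by base change along the inflation $X_1\to X_0$ of (\ref{equ11}); since the middle column is $\mathcal{C}(\mathcal{W},-)$-exact, Lemma \ref{lem3}(1) shows that $K^0_1\to N\to X_1$ is $\mathcal{C}(\mathcal{W},-)$-exact. Applying $\mathcal{C}(W,-)$ for $W\in\mathcal{W}$ to the whole diagram then makes all three columns and the top and bottom rows short exact, so the $3\times 3$-Lemma forces the middle row $N\to W_0^0\to X$ to be $\mathcal{C}(\mathcal{W},-)$-exact as well. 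This $\mathbb{E}$-triangle $N\to W_0^0\to X$ will serve as the $0$-th step of the resolution of $X$, with $N$ its first syzygy.

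Now $K^0_1\to N\to X_1$ is a $\mathcal{C}(\mathcal{W},-)$-exact $\mathbb{E}$-triangle whose outer terms carry proper $\mathcal{W}(\xi)$-resolutions: that of $K^0_1$ is the shift of (\ref{equ12}) (with terms $W_0^1,W_0^2,\dots$) and that of $X_1$ is (\ref{equ13}) (with terms $W_1^0,W_1^1,\dots$). I would therefore apply Lemma \ref{lem5}(1) repeatedly to this $\mathbb{E}$-triangle, exactly as in the ``continuing in this process'' step of Theorem \ref{thm1}, to build a proper $\mathcal{W}(\xi)$-resolution of the middle term $N$ whose $i$-th term is $W_0^{i+1}\oplus W_1^{i}$. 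Splicing this resolution onto $N\to W_0^0\to X$ yields precisely the complex (\ref{equ14}), and since every syzygy $\mathbb{E}$-triangle occurring is $\mathcal{C}(\mathcal{W},-)$-exact, (\ref{equ14}) is a proper $\mathcal{W}(\xi)$-resolution of $X$. This proves (1).

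For (2), under the extra assumption the bottom row (\ref{equ11}) and the middle column (the cover of $X_0$) of the first diagram are also $\mathcal{C}(-,\mathcal{W})$-exact. Arguing as in Proposition \ref{lem8}(1) and Theorem \ref{thm1}(2), Lemma \ref{lem4}(1) gives that $\mathcal{C}(W_0^0,W)\to\mathcal{C}(N,W)$ is epic, while $\mathcal{C}(X,W)\to\mathcal{C}(W_0^0,W)$ is monic as a composite of monics, so $N\to W_0^0\to X$ is $\mathcal{C}(-,\mathcal{W})$-exact; the $3\times 3$-Lemma then shows $K^0_1\to N\to X_1$ is $\mathcal{C}(-,\mathcal{W})$-exact too. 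Feeding these into Lemma \ref{lem5}(2) propagates $\mathcal{C}(-,\mathcal{W})$-exactness through the horseshoe, so (\ref{equ14}) is $\mathcal{C}(-,\mathcal{W})$-exact. I expect the main obstacle to be this exactness bookkeeping: \cite[Theorem 3.2]{HZZ} only produces the $3\times 3$ diagram, and the two flavours of exactness must be installed by different means---base change via Lemma \ref{lem3} together with the $3\times 3$-Lemma for $\mathcal{C}(\mathcal{W},-)$, but Lemma \ref{lem4} and left-exactness for $\mathcal{C}(-,\mathcal{W})$---so one must verify that each new $\mathbb{E}$-triangle inherits the correct exactness before Lemma \ref{lem5} is applied.
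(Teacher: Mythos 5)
Your proposal is correct and follows essentially the same route as the paper: the paper also applies \cite[Theorem 3.2]{HZZ} to the $\mathbb{E}$-triangle (\ref{equ11}) and the cover $K_0^1\to W_0^0\to X_0$ to obtain the $3\times 3$ diagram with middle row $L_1\to W_0^0\to X$ and first column $K_0^1\to L_1\to X_1$, installs $\mathcal{C}(\mathcal{W},-)$-exactness via Lemma \ref{lem3}(1), then iterates Lemma \ref{lem5}(1) on $K_0^1\to L_1\to X_1$ with the shifted resolution of $K_0^1$ and the resolution of $X_1$, and handles (2) via Proposition \ref{lem8}(1) and Lemma \ref{lem5}(2). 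Your only deviation is that you unfold the proof of Proposition \ref{lem8}(1) (Lemma \ref{lem4}(1), composites of monics, and the $3\times 3$-Lemma) where the paper simply cites it, which is the same argument in expanded form.
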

\begin{proof} (1) Since (\ref{equ12}) and (\ref{equ13}) are proper $\mathcal{W}(\xi)$-resolutions of $X_0$ and $X_1$ respectively, there exist $\mathcal{C}(\mathcal{W},-)$-exact $\mathbb{E}$-triangles
$$\xymatrix{K_0^{i+1}\ar[r]&W^i_0\ar[r]&K^i_0\ar@{-->}[r]^{\delta^i_0}&}~\textrm{and} ~\xymatrix{K_1^{i+1}\ar[r]&W^i_1\ar[r]&K^i_1\ar@{-->}[r]^{\delta^i_1}&}$$ in  $\xi$ for each integer $i\geqslant 0$ with $K_0^0=X_0$ and $K^0_1=X_1$.  It follows from \cite[Theorem 3.2]{HZZ} that there exists following commutative diagram
$$\xymatrix{
   K_0^1 \ar[d] \ar@{=}[r] & K_0^1 \ar[d]&\\
  L_1\ar[d] \ar[r] & W_0^0 \ar[d] \ar[r] & X \ar@{=}[d]\ar@{-->}[r]& &(\dagger)\\
  X_1 \ar[r]\ar@{-->}[d] & X_0 \ar[r]\ar@{-->}[d] & X\ar@{-->}[r]&\\
  &&&  }
$$
where all rows and columns are $\mathbb{E}$-triangle in $\xi$. Note that the middle column in diagram $(\dagger)$ is $\mathcal{C}(\mathcal{W},-)$-exact,
so is the first column by Lemma \ref{lem3}(1). Since the third row is $\mathcal{C}(\mathcal{W},-)$-exact by assumption, it is easy to check that the middle
 row   is  $\mathcal{C}(\mathcal{W},-)$-exact. By Lemma \ref{lem5}(1), we get the following commutative diagram:
$$\xymatrix{
  K_0^2 \ar[d] \ar[r] & L_2 \ar[d] \ar[r] & K_1^1 \ar[d]\ar@{-->}[r]& \\
  W_0^1\ar[d] \ar[r] & W_0^1\oplus W_1^0 \ar[d] \ar[r] & W_1^0 \ar[d]\ar@{-->}[r]^0& &(\ddagger)\\
  K_0^1 \ar[r]\ar@{-->}[d] & L_1 \ar[r]\ar@{-->}[d] & X_1 \ar@{-->}[r]\ar@{-->}[d]&\\
  &&&  }
$$
where all rows and columns  are $\mathcal{C}(\mathcal{W},-)$-exact $\mathbb{E}$-triangles in $\xi$. It is clear  $W_0^1\oplus W_1^0\in\mathcal{W}$ by assumption.
Then by using Lemma \ref{lem5}(1) iteratively, we get the desired proper $\mathcal{W}(\xi)$-resolution (\ref{equ14}) of $X$ which is spliced by $\mathcal{C}(\mathcal{W},-)$-exact $\mathbb{E}$-triangles $\xymatrix{L_{1}\ar[r]&W_0^0\ar[r]&X\ar@{-->}[r]&}$ and
 $\xymatrix{L_{i+1}\ar[r]&W^{i}_0\oplus W_1^{i-1}\ar[r]&L_i\ar@{-->}[r]&}$  for each integer $i\geqslant 1$.

(2) { It follows from Proposition \ref{lem8}(1) that all $\mathbb{E}$-triangles in diagram $(\dagger)$ are both $\mathcal{C}(\mathcal{W}-,)$-exact and $\mathcal{C}(-,\mathcal{W})$-exact.}  Both the first and third columns in  diagram $(\ddagger)$ are $\mathcal{C}(-,\mathcal{W})$-exact
  by assumption, so  the middle column in this diagram is also $\mathcal{C}(-,\mathcal{W})$-exact by Lemma \ref{lem5}(2).
   Finally we  deduce that the $\xi$-exact complex (\ref{equ14}) is $\mathcal{C}(-,\mathcal{W})$-exact.
\end{proof}

The next result which is dual to Theorem \ref{thm3}, provides a method for constructing a coproper
$\mathcal{W}(\xi)$-coresolution of the first term in an $\mathbb{E}$-triangle in $\xi$ from those of the
last two terms.

\begin{thm}\label{thm4} Let \begin{equation}\label{equ15}\xymatrix{Y\ar[r]&Y^0\ar[r]&Y^1\ar@{-->}[r]&}\end{equation} be a $\mathcal{C}(-, \mathcal{W})$-exact $\mathbb{E}$-triangle in $\xi$.
Assume that $\mathcal{W}$ is closed under finite direct sums, and let
 \begin{equation}\label{equ16} \xymatrix{0\ar[r]&Y^0\ar[r]&W_0^0\ar[r]&W_1^0\ar[r]&\cdots\ar[r]&W^0_n}\end{equation}
 and
  \begin{equation}\label{equ17}\xymatrix{0\ar[r]&Y^1\ar[r]&W_1^0\ar[r]&W_1^1\ar[r]&\cdots\ar[r]&W_{n-1}^1}\end{equation}
 be coproper $\mathcal{W}(\xi)$-resolutions of $Y^0$ and $Y^1$, respectively.

 {\rm (1)} Then we get a coproper $\mathcal{W}(\xi)$-resolution of $Y$
 \begin{equation}\label{equ18} \xymatrix{0\ar[r]&Y\ar[r]&W_0^0\ar[r]&W_1^0\oplus W_0^1\ar[r]&W_1^1\oplus W_0^2\ar[r]&\cdots\ar[r]&W_1^{n-1}\oplus W_0^n.}
 \end{equation}

 {\rm (2)} { If both the $\xi$-exact complexes (\ref{equ16}) and (\ref{equ17}) and the $\mathbb{E}$-triangle (\ref{equ15}) are $\mathcal{C}(\mathcal{W},-)$-exact, the so is the $\xi$-exact complex (\ref{equ18})}.
\end{thm}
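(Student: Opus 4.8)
The plan is to dualize the proof of Theorem~\ref{thm3} line by line, replacing the syzygy engine Lemma~\ref{lem5} by its cosyzygy counterpart Lemma~\ref{lem6} and interchanging the roles of $\mathcal{C}(-,\mathcal{W})$- and $\mathcal{C}(\mathcal{W},-)$-exactness throughout. First I would split the two given coproper coresolutions into $\mathcal{C}(-,\mathcal{W})$-exact cosyzygy $\mathbb{E}$-triangles $K_0^i\to W_0^i\to K_0^{i+1}\dashrightarrow$ and $K_1^i\to W_1^i\to K_1^{i+1}\dashrightarrow$ in $\xi$ for each $i\geqslant 0$, with $K_0^0=Y^0$ and $K_1^0=Y^1$. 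Applying the $3\times 3$ construction \cite[Theorem 3.2]{HZZ} to the $\mathbb{E}$-triangle (\ref{equ15}) together with the first cosyzygy triangle $Y^0\to W_0^0\to K_0^1\dashrightarrow$ of $Y^0$ produces a commutative diagram whose middle row $Y\to W_0^0\to L^1\dashrightarrow$ is the first step of the coresolution of $Y$ (with degree-$0$ term $W_0^0$) and whose right-hand column is a new $\mathbb{E}$-triangle $Y^1\to L^1\to K_0^1\dashrightarrow$.

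The heart of the argument is an induction that feeds this right-hand column into Lemma~\ref{lem6}(1) at each stage. At stage $k\geqslant 1$ the current $\mathbb{E}$-triangle is $K_1^{k-1}\to L^k\to K_0^k\dashrightarrow$, and I would apply Lemma~\ref{lem6} with $A=K_1^{k-1}$ and $C=K_0^k$, using the cosyzygy triangle $K_1^{k-1}\to W_1^{k-1}\to K_1^k\dashrightarrow$ of $A$ (whose middle term $W_1^{k-1}$ lies in $\mathcal{W}$, exactly as the lemma requires) and the cosyzygy triangle $K_0^k\to W_0^k\to K_0^{k+1}\dashrightarrow$ of $C$. The resulting $3\times 3$ diagram has middle column $L^k\to W_1^{k-1}\oplus W_0^k\to L^{k+1}\dashrightarrow$, which contributes the degree-$k$ term $W_1^{k-1}\oplus W_0^k$ (an object of $\mathcal{W}$, since $\mathcal{W}$ is closed under finite direct sums), and bottom row $K_1^k\to L^{k+1}\to K_0^{k+1}\dashrightarrow$, which serves as the input for stage $k+1$. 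Splicing the $\mathbb{E}$-triangles $Y\to W_0^0\to L^1\dashrightarrow$ and $L^k\to W_1^{k-1}\oplus W_0^k\to L^{k+1}\dashrightarrow$ for $k\geqslant 1$ then yields exactly the $\xi$-exact complex (\ref{equ18}).

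To confirm that this coresolution is coproper I would verify $\mathcal{C}(-,\mathcal{W})$-exactness stage by stage. The right-hand column $Y^1\to L^1\to K_0^1\dashrightarrow$ of the base diagram shares its third term $K_0^1$ with the $\mathcal{C}(-,\mathcal{W})$-exact cosyzygy column $Y^0\to W_0^0\to K_0^1\dashrightarrow$ of $Y^0$, so Lemma~\ref{lem3}(2) shows it is $\mathcal{C}(-,\mathcal{W})$-exact. Likewise, the inflation $Y\to W_0^0$ of the middle row factors as the inflation $Y\to Y^0$ of (\ref{equ15}) followed by the inflation $Y^0\to W_0^0$ of the cosyzygy column of $Y^0$, both of which induce epimorphisms on $\mathcal{C}(-,W)$ for every $W\in\mathcal{W}$; hence the middle row $Y\to W_0^0\to L^1\dashrightarrow$ is $\mathcal{C}(-,\mathcal{W})$-exact. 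Thereafter, at every stage the first and third columns fed to Lemma~\ref{lem6} are cosyzygy triangles of the coproper coresolutions and so are $\mathcal{C}(-,\mathcal{W})$-exact, whence Lemma~\ref{lem6}(1) makes every $\mathbb{E}$-triangle in the corresponding $3\times 3$ diagram $\mathcal{C}(-,\mathcal{W})$-exact, in particular each middle column; this proves part~(1). For part~(2), the added hypotheses make both generators of the base diagram simultaneously $\mathcal{C}(\mathcal{W},-)$- and $\mathcal{C}(-,\mathcal{W})$-exact, so Proposition~\ref{lem8}(2) renders all its $\mathbb{E}$-triangles $\mathcal{C}(\mathcal{W},-)$-exact as well; feeding the now bi-exact columns into Lemma~\ref{lem6}(2) at each stage propagates $\mathcal{C}(\mathcal{W},-)$-exactness, and therefore (\ref{equ18}) is $\mathcal{C}(\mathcal{W},-)$-exact.

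Because Lemma~\ref{lem6}, Lemma~\ref{lem3}(2), Lemma~\ref{lem4} and Proposition~\ref{lem8}(2) are precisely the duals of the ingredients used in Theorem~\ref{thm3}, I anticipate no essentially new obstacle. The main point requiring care is the cohomological index bookkeeping, ensuring that the degree-$k$ term emerges as $W_1^{k-1}\oplus W_0^k$ (with the degree-$0$ term the single object $W_0^0$) rather than a shifted or mismatched pairing, together with the one genuinely non-formal check that the singled-out middle row $Y\to W_0^0\to L^1\dashrightarrow$ of the base diagram is itself coproper.
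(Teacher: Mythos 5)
Your proposal is correct and coincides with the paper's own (omitted) proof: the paper disposes of Theorem \ref{thm4} with the single line ``dual to Theorem \ref{thm3}'', and your stage-by-stage dualization---base diagram from \cite[Theorem 3.2]{HZZ} in the shape of Proposition \ref{lem8}(2), exactness transfer to the third column via Lemma \ref{lem3}(2), induction via Lemma \ref{lem6}(1) with the cosyzygy triangle of the \emph{first} term supplying the $\mathcal{W}$-object, and Lemma \ref{lem6}(2) together with Proposition \ref{lem8}(2) for part (2)---is exactly that dual argument with the indices handled correctly. Your treatment of the one step you flag (coproperness of the row $Y\to W_0^0\to L^1$) establishes the surjectivity half explicitly and leaves the injectivity of $\mathcal{C}(L^1,W)\to\mathcal{C}(W_0^0,W)$ implicit, but this follows by the same Lemma \ref{lem4}-type argument the paper itself waves at with ``it is easy to check'' in Theorem \ref{thm3}, so there is no genuine gap.
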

\begin{proof} The proof is dual to Theorem \ref{thm3}, so we omit it.
\end{proof}

\begin{rem}\label{rem:3.11} Note that extriangulated categories are a simultaneous generalization of
abelian categories and triangulated categories. It follows that  Theorems \ref{thm1}, \ref{thm2}, \ref{thm3} and \ref{thm4} here unify Theorems 3.2, 3.4, 3.6 and 3.8 proved by Huang in abelian categories, and Theorems 2.3, 2.5, 2.7 and 2.9 proved by Yang-Wang in triangulated categories. It should be noted that our results here are new for an exact category case.
\end{rem}

\section{\bf Gorensteinness in extriangulated categories}
In this section, some applications of the results in Section 3 are given. In the following, we always assume that $\mathcal{W}$ is a class of objects in $\mathcal{C}$ which is closed under isomorphisms and finite direct sums. We introduce
the Gorenstein category $\mathcal{GW}(\xi)$ in extriangulated categories and  demonstrate that this category shares some
basic properties with Gorenstein category in the abelian category or in the triangulated category.

\begin{definition} {\rm Let $X$ be an object of $\mathcal{C}$. A {\it complete $\mathcal{W}(\xi)$-resolution} of $X$ is both $\mathcal{C}(\mathcal{W},-)$-exact
 and $\mathcal{C}(-,\mathcal{W})$-exact $\xi$-exact complex$$\xymatrix{\cdots\ar[r]&W_1\ar[r]&W_0\ar[r]&W^0\ar[r]&W^1\ar[r]&\cdots}$$ in $\mathcal{W}$
such that $\xymatrix{X_1\ar[r]&W_0\ar[r]&X\ar@{-->}[r]&}$ and $\xymatrix{X\ar[r]&W^0\ar[r]&X^1\ar@{-->}[r]&}$  are corresponding $\mathbb{E}$-triangles in $\xi$.}
\end{definition}

The {\it Gorenstein subcategory $\mathcal{GW}(\xi)$} of $\mathcal{C}$ is defined as
\begin{center} $\mathcal{GW}(\xi)=\{X\in\mathcal{C}~|~X$ admits a complete $\mathcal{W}(\xi)$-resolution$\}$.\end{center}
Set $\mathcal{GW}^1(\xi)=\mathcal{GW}(\xi)$, and inductively set $\mathcal{GW}^{n+1}(\xi)=\mathcal{G}(\mathcal{GW}^n(\xi))$ for any $n\geq 1$.

\begin{remark}\label{rem:4.2} {\emph{(1)} Assume that $\mathcal{C}$ is an abelian category. If $\xi$ is the class of exact sequences and $\mathcal{W}$ is a full additive subcategory of $\mathcal{C}$ that is closed under isomorphisms, then Gorenstein subcategory $\mathcal{GW}(\xi)$ defined
in here coincides with the earlier one given by Sather-Wagstaff, Sharif and White in \cite{Sather}.

\emph{(2)} Assume that $(\mathcal{T},\Sigma, \Delta)$ is a triangulated category and $\xi$ is { a proper class  of triangles} which is closed under suspension \emph{(see \cite[Section 2.2]{Bel1})}. If $\mathcal{W}$ is an additive full subcategory of $\mathcal{T}$ closed under isomorphisms and $\Sigma$-stable, i.e., $\Sigma(\mathcal{W})=\mathcal{W}$, then Gorenstein subcategory $\mathcal{GW}(\xi)$ defined
in here coincides with the earlier one given by Yang and Wang in \cite{Yang3}.

\emph{(3)} Assume that the extriangulated category $(\mathcal{C}, \mathbb{E}, \mathfrak{s})$ has enough $\xi$-projectives and enough $\xi$-injectives. If
$\mathcal{W}$ is the class of $\xi$-projectives \emph{(}resp. $\xi$-injectives\emph{)}, then Gorenstein subcategory $\mathcal{GW}(\xi)$ defined
in here coincides with the subcategory of $\mathcal{C}$ consisting of $\xi$-$\mathcal{G}$projective \emph{(}resp., $\xi$-$\mathcal{G}$injective objects\emph{) (}see \cite{HZZ}\emph{)}.}
\end{remark}

\begin{lem}\label{lem10} Given both $\mathcal{C}(\mathcal{W},-)$-exact and $\mathcal{C}(-,\mathcal{W})$-exact $\mathbb{E}$-triangle
$$\xymatrix{X\ar[r]&Y\ar[r]&Z\ar@{-->}[r]&}$$ in $\xi$, if $X, Z\in\mathcal{GW}(\xi)$ , then so is $Y$.
\end{lem}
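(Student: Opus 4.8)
The plan is to produce a complete $\mathcal{W}(\xi)$-resolution of $Y$ by splicing a proper $\mathcal{W}(\xi)$-resolution and a coproper $\mathcal{W}(\xi)$-coresolution of $Y$, each built from the corresponding data for $X$ and $Z$ via the horseshoe-type constructions of Lemmas \ref{lem5} and \ref{lem6}. First I would unpack the hypothesis $X,Z\in\mathcal{GW}(\xi)$: each admits a complete $\mathcal{W}(\xi)$-resolution, from which I extract proper $\mathcal{W}(\xi)$-resolutions of $X$ and $Z$ together with coproper $\mathcal{W}(\xi)$-coresolutions of $X$ and $Z$, every one of which is simultaneously $\mathcal{C}(\mathcal{W},-)$-exact and $\mathcal{C}(-,\mathcal{W})$-exact. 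I also record the first syzygy and cosyzygy $\mathbb{E}$-triangles for $X$ and $Z$, which will feed the induction.

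The main construction is the horseshoe for the \emph{middle} term. The given $\mathbb{E}$-triangle $\xymatrix@C=1.6em{X\ar[r]&Y\ar[r]&Z\ar@{-->}[r]&}$ is, by hypothesis, both $\mathcal{C}(\mathcal{W},-)$-exact and $\mathcal{C}(-,\mathcal{W})$-exact. Applying Lemma \ref{lem5} with $A=X$, $B=Y$, $C=Z$ and the first steps of the chosen resolutions of $X$ and $Z$ produces a $3\times3$ diagram whose middle resolution step is $W_0^X\oplus W_0^Z\in\mathcal{W}$ and whose top row is the induced syzygy $\mathbb{E}$-triangle $\xymatrix@C=1.6em{K_1^X\ar[r]&K_1^Y\ar[r]&K_1^Z\ar@{-->}[r]&}$. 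Because the outer columns (the resolutions of $X$ and $Z$) and the bottom row (the triangle $X\to Y\to Z$) are both $\mathcal{C}(\mathcal{W},-)$- and $\mathcal{C}(-,\mathcal{W})$-exact, both parts (1) and (2) of Lemma \ref{lem5} apply, so every $\mathbb{E}$-triangle in the diagram, in particular the syzygy triangle, inherits both exactness properties. I would then iterate: at stage $n$ the syzygy triangle $K_n^X\to K_n^Y\to K_n^Z$ plays the role of the doubly exact triangle, and the next steps of the shifted resolutions of $K_n^X$ and $K_n^Z$ play the role of the outer columns. This yields a proper $\mathcal{W}(\xi)$-resolution $\xymatrix@C=1.4em{\cdots\ar[r]&W_i^X\oplus W_i^Z\ar[r]&\cdots\ar[r]&W_0^X\oplus W_0^Z\ar[r]&Y\ar[r]&0}$ of $Y$ that is simultaneously $\mathcal{C}(\mathcal{W},-)$- and $\mathcal{C}(-,\mathcal{W})$-exact, using that $\mathcal{W}$ is closed under finite direct sums.

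Dually, applying Lemma \ref{lem6} to the same triangle (now using its $\mathcal{C}(-,\mathcal{W})$-exactness) together with the first steps of the coproper $\mathcal{W}(\xi)$-coresolutions of $X$ and $Z$, and iterating, produces a coproper $\mathcal{W}(\xi)$-coresolution $\xymatrix@C=1.4em{0\ar[r]&Y\ar[r]&W_X^0\oplus W_Z^0\ar[r]&\cdots}$ of $Y$ that is again both $\mathcal{C}(\mathcal{W},-)$- and $\mathcal{C}(-,\mathcal{W})$-exact. Splicing this coresolution and the resolution above at $Y$ gives a $\xi$-exact complex in $\mathcal{W}$ that is both $\mathcal{C}(\mathcal{W},-)$-exact and $\mathcal{C}(-,\mathcal{W})$-exact, with the required $\mathbb{E}$-triangles $Y_1\to W_0^X\oplus W_0^Z\to Y$ and $Y\to W_X^0\oplus W_Z^0\to Y^1$ arising from the first stages of the two constructions. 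This is exactly a complete $\mathcal{W}(\xi)$-resolution of $Y$, whence $Y\in\mathcal{GW}(\xi)$.

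I expect the main obstacle to be bookkeeping rather than conceptual: one must verify at every stage of the induction that the hypotheses of Lemma \ref{lem5}(1),(2) (respectively Lemma \ref{lem6}(1),(2)) are met, that is, that the freshly produced syzygy (cosyzygy) triangle is again doubly exact and that the outer columns remain so after shifting, and that the two half-complexes splice into a genuine $\xi$-exact complex with correctly composed differentials. Both follow from the fact that the complete resolutions of $X$ and $Z$ are doubly exact in every degree, so no extra hypothesis on $\mathcal{W}$ beyond closure under finite direct sums is needed.
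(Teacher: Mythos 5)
Your proposal is correct and takes essentially the same route as the paper: the paper's proof simply takes complete $\mathcal{W}(\xi)$-resolutions of $X$ and $Z$ and observes that their degreewise direct sum is a complete $\mathcal{W}(\xi)$-resolution of $Y$ by repeated application of Lemmas \ref{lem5} and \ref{lem6}, which is exactly the iterate-and-splice argument you spell out (your version just makes the bookkeeping explicit).
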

\begin{proof} Assume that $X,Z\in\mathcal{GW}(\xi)$, there exist complete $\mathcal{W}(\xi)$-resolutions $$\xymatrix{\cdots\ar[r]&W_1\ar[r]&W_0\ar[r]&W^0\ar[r]&W^1\ar[r]&\cdots}$$ and $$\xymatrix{\cdots\ar[r]&V_1\ar[r]&V_0\ar[r]&V^0\ar[r]&V^1\ar[r]&\cdots}$$
of $X$ and $Z$ respectively. It is straightforward to show that $$\xymatrix{\cdots\ar[r]&W_1\oplus V_1\ar[r]&W_0\oplus V_0\ar[r]&W^0\oplus V^0\ar[r]&W^1\oplus V^1\ar[r]&\cdots}$$ is a complete $\mathcal{W}(\xi)$-resolution of $Y$ by repeating application of {Lemmas \ref{lem5} and \ref{lem6}}.
\end{proof}
As a main application of the results in Section 3, we have the main result of this section.
\begin{thm} \label{thm:stability} The following are true for any extriangulated category $(\mathcal{C}, \mathbb{E}, \mathfrak{s})$:

\emph{(1)}  $\mathcal{GW}^n(\xi)=\mathcal{GW}(\xi)$ for any $n\geqslant1$;

\emph{(2)}  $\mathcal{GW}(\xi)$ is closed under direct summands.
\end{thm}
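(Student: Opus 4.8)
The plan is to prove the stability statement (1) and the direct-summand statement (2) by reducing everything to the constructions of Section 3, exactly as in Huang's and Yang-Wang's arguments but now phrased $\mathbb{E}$-triangularly. The key observation is that a complete $\mathcal{W}(\xi)$-resolution of $X$ splits into two halves: a \emph{proper} $\mathcal{W}(\xi)$-resolution $\cdots\to W_1\to W_0\to X\to 0$ and a \emph{coproper} $\mathcal{W}(\xi)$-coresolution $0\to X\to W^0\to W^1\to\cdots$, both of which are simultaneously $\mathcal{C}(\mathcal{W},-)$-exact and $\mathcal{C}(-,\mathcal{W})$-exact. So an object lies in $\mathcal{GW}(\xi)$ precisely when it admits such a pair that can be spliced through $\mathbb{E}$-triangles in $\mathcal{W}$. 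For (1) the inclusion $\mathcal{GW}(\xi)\subseteq\mathcal{GW}^2(\xi)$ is trivial since $\mathcal{W}\subseteq\mathcal{GW}(\xi)$ and any complete $\mathcal{W}(\xi)$-resolution is a fortiori a complete $\mathcal{GW}(\xi)$-resolution; the content is the reverse inclusion $\mathcal{GW}^2(\xi)\subseteq\mathcal{GW}(\xi)$, and then induction on $n$ gives the general case.

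For the reverse inclusion, I would take $X\in\mathcal{GW}^2(\xi)=\mathcal{G}(\mathcal{GW}(\xi))$, so $X$ has a complete $\mathcal{GW}(\xi)$-resolution built from $\mathbb{E}$-triangles $\xymatrix{G_{i+1}\ar[r]&G_i\ar[r]&M_i\ar@{-->}[r]&}$ and $\xymatrix{M^i\ar[r]&G^i\ar[r]&M^{i+1}\ar@{-->}[r]&}$ with each $G_i,G^i\in\mathcal{GW}(\xi)$ and each of the spliced $\mathbb{E}$-triangles both $\mathcal{C}(\mathcal{GW}(\xi),-)$-exact and $\mathcal{C}(-,\mathcal{GW}(\xi))$-exact. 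Each $G_i$ and $G^i$ in turn carries a genuine complete $\mathcal{W}(\xi)$-resolution. The strategy is to feed these data into Theorems \ref{thm1}, \ref{thm2}, \ref{thm3} and \ref{thm4}: starting from a $\mathcal{C}(\mathcal{W},-)$-exact (respectively $\mathcal{C}(-,\mathcal{W})$-exact) $\mathbb{E}$-triangle whose outer terms possess proper $\mathcal{W}(\xi)$-resolutions, those theorems produce a proper $\mathcal{W}(\xi)$-resolution of the middle (or third) term, and dually for coresolutions. Iterating along the whole $\mathcal{GW}(\xi)$-resolution of $X$ and using that $\mathcal{C}(\mathcal{W},-)$-exactness passes up the tower (since $\mathcal{W}\subseteq\mathcal{GW}(\xi)$ forces $\mathcal{C}(\mathcal{GW}(\xi),-)$-exactness to imply $\mathcal{C}(\mathcal{W},-)$-exactness, and symmetrically for the injective side), one assembles an honest complete $\mathcal{W}(\xi)$-resolution of $X$, witnessing $X\in\mathcal{GW}(\xi)$.

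For (2), suppose $X=X_1\oplus X_2\in\mathcal{GW}(\xi)$ and I want $X_1\in\mathcal{GW}(\xi)$. The standard device is Eilenberg's swindle adapted to this setting: form $Y=\bigoplus_{n\ge 0}(X_1\oplus X_2)$, which lies in $\mathcal{GW}(\xi)$ because $\mathcal{GW}(\xi)$ is closed under the relevant coproducts (this itself must be checked from the definition, using that $\mathcal{W}$ is closed under the direct sums in question), and observe that $X_1\oplus Y\cong Y$. Then one exhibits split $\mathbb{E}$-triangles $\xymatrix{X_1\ar[r]&Y\ar[r]&Y\ar@{-->}[r]&}$ and its dual, which are automatically both $\mathcal{C}(\mathcal{W},-)$-exact and $\mathcal{C}(-,\mathcal{W})$-exact (split triangles lie in $\Delta_0\subseteq\xi$ and the induced Hom-sequences split). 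Applying Lemma \ref{lem10}-type splicing together with Theorems \ref{thm1}--\ref{thm4} to these triangles lets one transport the complete $\mathcal{W}(\xi)$-resolution of $Y$ onto a complete $\mathcal{W}(\xi)$-resolution of the summand $X_1$.

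The main obstacle I anticipate is \textbf{not} the swindle but the bookkeeping in (1): one must verify that at every splicing step the exactness conditions $\mathcal{C}(\mathcal{W},-)$ and $\mathcal{C}(-,\mathcal{W})$ are preserved simultaneously, and that the resolution produced by iterating Theorems \ref{thm1}--\ref{thm4} closes up into a single doubly-infinite $\xi$-exact complex in $\mathcal{W}$ (not merely in $\mathcal{GW}(\xi)$) whose every syzygy and cosyzygy again lies in $\mathcal{GW}(\xi)$. In the abelian and triangulated settings this is handled by a dimension-shifting/diagram-chasing argument; here the delicate point is that the extriangulated $3\times 3$ and base/cobase-change machinery (Lemma \ref{lem1}, Proposition \ref{prop}, \cite[Theorem 3.2]{HZZ}) must be invoked repeatedly while keeping the two $\operatorname{Hom}$-exactness properties in force, and Condition (WIC) is what guarantees the requisite hokernels and hocokernels stay inside $\mathcal{W}$. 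I would isolate this preservation as the genuinely new technical step and relegate the swindle and the induction to routine consequences.
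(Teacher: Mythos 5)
Your approach to part (2) contains the decisive gap: Eilenberg's swindle requires forming $Y=\bigoplus_{n\geq 0}(X_1\oplus X_2)$, i.e.\ a countable coproduct, and then requires $\mathcal{GW}(\xi)$ (hence $\mathcal{W}$) to be closed under such coproducts. Neither is available here: the theorem is asserted for an arbitrary extriangulated category satisfying only the standing hypotheses (enough $\xi$-projectives and $\xi$-injectives, Condition (WIC)), and $\mathcal{W}$ is assumed closed only under isomorphisms and \emph{finite} direct sums. This is not a technicality but precisely the point of the result: as Remark \ref{rem:4.19} explains, Yang and Wang proved the triangulated case under the extra hypotheses of countable coproducts and $\mathcal{W}$ closed under them, and Theorem \ref{thm:stability} refines their result by removing those hypotheses. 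The paper's proof of (2) instead starts from the two split $\mathbb{E}$-triangles $H\rightarrow G\rightarrow H'$ and $H'\rightarrow G\rightarrow H$ and iterates Proposition \ref{lem8}(2), cobase change, and Lemma \ref{lem10} to build, one term at a time, a coproper $\mathcal{W}(\xi)$-coresolution (and dually a proper $\mathcal{W}(\xi)$-resolution) of the summand $H$, using finite direct sums only. Your swindle cannot be repaired within the stated generality.

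There are also two problems in your treatment of (1). First, your ``trivial'' inclusion $\mathcal{GW}(\xi)\subseteq\mathcal{GW}^2(\xi)$ is argued backwards: a complete $\mathcal{GW}(\xi)$-resolution must be $\mathcal{C}(\mathcal{GW}(\xi),-)$-exact and $\mathcal{C}(-,\mathcal{GW}(\xi))$-exact, i.e.\ exact against the \emph{larger} class, which is a strictly stronger condition than the $\mathcal{C}(\mathcal{W},\pm)$-exactness of a complete $\mathcal{W}(\xi)$-resolution; so ``a fortiori'' fails. The correct, genuinely trivial witness (used in the paper) is the complex $\cdots\rightarrow 0\rightarrow G\xrightarrow{\ 1\ }G\rightarrow 0\rightarrow\cdots$, which is exact against everything. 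Second, for the hard inclusion $\mathcal{GW}^2(\xi)\subseteq\mathcal{GW}(\xi)$ you correctly identify that both Hom-exactness conditions must survive every splicing step, but you leave exactly that step unproved; moreover Theorems \ref{thm1}--\ref{thm4} cannot be applied to the $\mathcal{GW}(\xi)$-resolution wholesale, because its syzygies $K_i$ are not known in advance to admit proper $\mathcal{W}(\xi)$-resolutions. The paper's argument instead constructs intermediate objects $M_i$, $N_i$ via Proposition \ref{lem8}(1) and the $3\times 3$-type diagram of [HZZ, Theorem 3.2], and its crux --- absent from your sketch --- is the splitting trick proving that the triangle $L_1\rightarrow N_1\rightarrow G_1$ is $\mathcal{C}(-,\mathcal{W})$-exact: compare it with a $\mathcal{W}$-approximation $H\rightarrow W\rightarrow G_1$ of $G_1$, observe the induced triangle over $W\in\mathcal{W}$ splits, and invoke [HZZ, Lemma 4.10]; only then does Lemma \ref{lem10} give $N_1\in\mathcal{GW}(\xi)$ so that the induction can continue. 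Without this step your plan does not close up into a complete $\mathcal{W}(\xi)$-resolution.
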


\begin{proof} (1) It suffices to show that $\mathcal{GW}^2(\xi)=\mathcal{GW}(\xi)$. Let $G\in\mathcal{GW}(\xi)$. Note that  $$\xymatrix{G\ar[r]^1&G\ar[r]&0\ar@{-->}[r]&}~\textrm{and } \xymatrix{0\ar[r]&G\ar[r]^1&G\ar@{-->}[r]&}$$  are  $\mathbb{E}$-triangles in $\xi$. It is easy to check that $$\xymatrix{\cdots\ar[r]&0\ar[r]&G\ar[r]^1&G\ar[r]&0\ar[r]&\cdots}$$ is a complete $\mathcal{GW}(\xi)$-resolution of $G$, then $G\in\mathcal{GW}^2(\xi)$ which implies that $\mathcal{GW}(\xi)\subseteq\mathcal{GW}^2(\xi)$.

Let $X$ be an object in $\mathcal{GW}^2(\xi)$ and $$\xymatrix{\cdots\ar[r]&G_1\ar[r]&G_0\ar[r]&G^0\ar[r]&G^1\ar[r]&\cdots}$$ a complete $\mathcal{GW}(\xi)$-resolution of $X$. That is, there exist both $\mathcal{C}(\mathcal{GW}(\xi),-)$-exact and $\mathcal{C}(-,\mathcal{GW}(\xi))$-exact $\mathbb{E}$-triangles
$\xymatrix{K_{i+1}\ar[r]&G_i\ar[r]&K_i\ar@{-->}[r]&}$  and $\xymatrix{K^{i}\ar[r]&G^i\ar[r]&K^{i+1}\ar@{-->}[r]&}$ in $\xi$ with $G_i,G^i\in\mathcal{GW}(\xi)$ for any $i\geq 0$, where $K_0=K^0=X$. Since $G_0\in\mathcal{GW}(\xi)$, there exists a both $\mathcal{C}(\mathcal{W},-)$-exact and $\mathcal{C}(-,\mathcal{W})$-exact $\mathbb{E}$-triangle
$\xymatrix{L_1\ar[r]&W_0\ar[r]&G_0\ar@{-->}[r]&}$ in $\xi$ with $W_0\in\mathcal{W}$ and $L_1\in\mathcal{GW}(\xi)$. Note that the $\mathbb{E}$-triangle
$\xymatrix{K_{1}\ar[r]&G_0\ar[r]&X\ar@{-->}[r]&}$ in $\xi$ is both $\mathcal{C}(\mathcal{W},-)$-exact and $\mathcal{C}(-,\mathcal{W})$-exact, so we have the following commutative diagram
$$\xymatrix{
   L_1\ar[d] \ar@{=}[r] & L_1 \ar[d]&\\
  M_1\ar[d] \ar[r] & W_0 \ar[d] \ar[r] & X \ar@{=}[d]\ar@{-->}[r]& \\
  K_1 \ar[r]\ar@{-->}[d] & G_0 \ar[r]\ar@{-->}[d] & X\ar@{-->}[r]&\\
  &&&  }
$$
where all rows and columns are both $\mathcal{C}(\mathcal{W},-)$-exact and $\mathcal{C}(-,\mathcal{W})$-exact $\mathbb{E}$-triangles in $\xi$ by Proposition \ref{lem8}(1). It follows from \cite[Theorem 3.2]{HZZ} that we have the following commutative diagram
$$\xymatrix{
   &K_2 \ar[d] \ar@{=}[r] & K_2 \ar[d]\\
  L_1\ar@{=}[d] \ar[r] & N_1\ar[d] \ar[r] & G_1 \ar[d]\ar@{-->}[r]& &(\diamond)\\
  L_1\ar[r] & M_1 \ar[r]\ar@{-->}[d] & K_1\ar@{-->}[r]\ar@{-->}[d]&\\
  &&&  }
$$
where all rows and columns are  $\mathbb{E}$-triangles in $\xi$. Moreover, all $\mathbb{E}$-triangles in diagram $(\diamond)$ are $\mathcal{C}(\mathcal{W},-)$-exact by Lemma \ref{lem3}(1).

Next we claim that the $\mathbb{E}$-triangle $\xymatrix{L_{1}\ar[r]&N_1\ar[r]&G_1\ar@{-->}[r]&}$ in $\xi$ is $\mathcal{C}(-, \mathcal{W})$-exact. In fact, there exists a both $\mathcal{C}(\mathcal{W},-)$-exact and $\mathcal{C}(-,\mathcal{W})$-exact $\mathbb{E}$-triangle
$\xymatrix{H\ar[r]&W\ar[r]&G_1\ar@{-->}[r]&}$ in $\xi$ with $W\in\mathcal{W}$ and $H\in\mathcal{GW}(\xi)$ since $G_1\in\mathcal{GW}(\xi)$. So we have the following commutative diagram
$$\xymatrix{
   &H \ar[d] \ar@{=}[r] & H \ar[d]\\
  L_1\ar@{=}[d] \ar[r] & Z\ar[d] \ar[r] & W \ar[d]\ar@{-->}[r]& &(\diamond\diamond)\\
  L_1\ar[r] & N_1 \ar[r]\ar@{-->}[d] & G_1\ar@{-->}[r]\ar@{-->}[d]&\\
  &&&  }
$$
where all rows and columns are  $\mathbb{E}$-triangles in $\xi$. Moreover, the second row in diagram  $(\diamond\diamond)$ is $\mathcal{C}(\mathcal{W},-)$-exact by Lemma \ref{lem3}(1), and it is split as $W\in\mathcal{W}$. Hence the second row in diagram  $(\diamond\diamond)$ is $\mathcal{C}(-, \mathcal{W})$-exact, it follows from \cite[Lemma 4.10(1)]{HZZ} that the $\mathbb{E}$-triangle $\xymatrix{L_{1}\ar[r]&N_1\ar[r]&G_1\ar@{-->}[r]&}$ in $\xi$ is $\mathcal{C}(-, \mathcal{W})$-exact because the third column in diagram  $(\diamond\diamond)$ is $\mathcal{C}(-,\mathcal{W})$-exact.
Since $G_1, L_1\in\mathcal{GW}(\xi)$, $N_1\in\mathcal{GW}(\xi)$ by Lemma \ref{lem10}.  It is easy to show that all $\mathbb{E}$-triangles in diagram $(\diamond)$ are $\mathcal{C}(-,\mathcal{W})$-exact. Hence all $\mathbb{E}$-triangles in diagram $(\diamond)$ are both $\mathcal{C}(\mathcal{W},-)$-exact and $\mathcal{C}(-,\mathcal{W})$-exact. Since $N_1\in\mathcal{GW}(\xi)$, there exists a both $\mathcal{C}(\mathcal{W},-)$-exact and $\mathcal{C}(-,\mathcal{W})$-exact $\mathbb{E}$-triangle
$\xymatrix{L_2\ar[r]&W_1\ar[r]&N_1\ar@{-->}[r]&}$ in $\xi$ with $W_1\in\mathcal{W}$ and $L_2\in\mathcal{GW}(\xi)$. So we have the following commutative diagram
$$\xymatrix{
   L_2\ar[d] \ar@{=}[r] & L_2 \ar[d]&\\
  M_2\ar[d] \ar[r] & W_1 \ar[d] \ar[r] & M_1 \ar@{=}[d]\ar@{-->}[r]& \\
  K_2 \ar[r]\ar@{-->}[d] & N_1\ar[r]\ar@{-->}[d] & M_1\ar@{-->}[r]&\\
  &&&  }
$$
where all rows and columns are both $\mathcal{C}(\mathcal{W},-)$-exact and $\mathcal{C}(-,\mathcal{W})$-exact $\mathbb{E}$-triangles in $\xi$ by Proposition \ref{lem8}(1). Proceeding in this manner, we can get both $\mathcal{C}(\mathcal{W},-)$-exact and $\mathcal{C}(-,\mathcal{W})$-exact $\mathbb{E}$-triangles
 $\xymatrix{M_{i+1}\ar[r]&W_i\ar[r]&M_i\ar@{-->}[r]&}$ in $\xi$ with $W_i\in\mathcal{W}$. Spliced these $\mathbb{E}$-triangles together, we obtain the following both $\mathcal{C}(\mathcal{W},-)$-exact and $\mathcal{C}(-,\mathcal{W})$-exact $\xi$-exact complex $$\xymatrix{\cdots\ar[r]&W_1\ar[r]&W_0\ar[r]&X\ar[r]&0}$$
 with $W_i\in\mathcal{W}, i\geq 0$.

 Dually, we can obtain a both
 $\mathcal{C}(\mathcal{W},-)$-exact and $\mathcal{C}(-,\mathcal{W})$-exact $\xi$-exact complex $$\xymatrix{0\ar[r]&X\ar[r]&W^0\ar[r]&W^1\ar[r]&\cdots}$$
 with $W^i\in\mathcal{W}, i\geq 0$. Hence $X\in\mathcal{GW}(\xi)$, as desired.

 (2) Assume that $G\in\mathcal{GW}(\xi)$ and $H$ is a direct summand of $G$, then there exists $H'\in\mathcal{C}$ such that $G=H\oplus H'$. Therefore there exist two split $\mathbb{E}$-triangles  $\xymatrix@C=2em{H\ar[r]^{\tiny\begin{bmatrix}1\\0\end{bmatrix}}&G\ar[r]^{\tiny\begin{bmatrix}0&1\end{bmatrix}\ \ }&H'\ar@{-->}[r]^0&}$ and $\xymatrix@C=2em{H'\ar[r]^{\tiny\begin{bmatrix}0\\1\end{bmatrix}}&G\ar[r]^{\tiny\begin{bmatrix}1&0\end{bmatrix}\ \ }&H\ar@{-->}[r]^0&}$
in $\xi$. Since $G\in\mathcal{GW}(\xi)$, there exists a both $\mathcal{C}(\mathcal{W},-)$-exact and  $\mathcal{C}(-,\mathcal{W})$-exact $\mathbb{E}$-triangle $\xymatrix{G\ar[r]^{\alpha_{-1}}&W_{-1}\ar[r]^{\beta_{-1}}&K_{-1}\ar@{-->}[r]^{\delta_{-1}}&}$ in $\xi$ with $W_{-1}\in\mathcal{W}$ and $K_{-1}\in\mathcal{GW}(\xi)$. It follows from Proposition \ref{lem8}(2) that we have the following commutative diagram

$$\xymatrix@C=3em{
  H\ar[r]^{\tiny\begin{bmatrix}1\\0\end{bmatrix}} \ar@{=}[d] &G \ar[r]^{\tiny\begin{bmatrix}0&1\end{bmatrix}} \ar[d]^{\alpha_{-1}}& H' \ar[d]^{\alpha'_{-1}} \ar@{-->}[r]^0 &  \\
  H\ar[r]^{g_{-1}} & W_{-1} \ar[r]^{f_{-1}} \ar[d]^{\beta_{-1}} & X\ar@{-->}[r]^{\rho_{-1}} \ar[d]^{\beta'_{-1}} &\\
  & K_{-1} \ar@{-->}[d]^{\delta_{-1}} \ar@{=}[r] &K_{-1}\ar@{-->}[d]^{\tiny\begin{bmatrix}0&1\end{bmatrix}_*\delta_{-1}}& \\
  &&&}
$$
  where all rows and columns are both $\mathcal{C}(\mathcal{W},-)$-exact and  $\mathcal{C}(-,\mathcal{W})$-exact  $\mathbb{E}$-triangles in $\xi$.
Note that $\xymatrix@C=2em{H'\ar[r]^{\alpha'_{-1}}&X\ar[r]^{\beta'_{-1}}&K_{-1}\ar@{-->}[r]^{\tiny\begin{bmatrix}0&1\end{bmatrix}_*\delta_{-1}}&}$  and $\xymatrix@C=2em{H'\ar[r]^{\tiny\begin{bmatrix}0\\1\end{bmatrix}}&G\ar[r]^{\tiny\begin{bmatrix}1&0\end{bmatrix}\ \ }&H\ar@{-->}[r]^0&}$ are  $\mathbb{E}$-triangles in $\xi$. Then there exists a commutative diagram
$$\xymatrix@C=3em{
    H' \ar[d]_{\tiny\begin{bmatrix}0\\1\end{bmatrix}} \ar[r]^{\alpha'_{-1}} & X \ar[d]_{g'_{-1}} \ar[r]^{\beta'_{-1}} & K_{-1} \ar@{=}[d] \ar@{-->}[r]^{\tiny\begin{bmatrix}0&1\end{bmatrix}_*\delta_{-1}}&\\
  G \ar[d]_{\tiny\begin{bmatrix}1&0\end{bmatrix}} \ar[r]^{\alpha''_{-1}} & G_{-1} \ar[d]_{f'_{-1}} \ar[r]^{\beta''_{-1}}&K_{-1}\ar@{-->}[r]^{\delta_{-1}'}&\\
    H \ar@{=}[r]\ar@{-->}[d]^0&  H \ar@{-->}[d]^0& & \\
    && &}
$$
where $\xymatrix@C=2em{G\ar[r]^{\alpha''_{-1}}&G_{-1}\ar[r]^{\beta''_{-1}}&K_{-1}\ar@{-->}[r]^{\delta_{-1}'}&}$ and $\xymatrix@C=2em{X\ar[r]^{g'_{-1}}&G_{-1}\ar[r]^{f'_{-1}}&H\ar@{-->}[r]^{0}&}$ are $\mathbb{E}$-triangles in $\xi$  because $\xi$ is closed under cobase change. It is easy to check that the second row is  both $\mathcal{C}(\mathcal{W},-)$-exact and  $\mathcal{C}(-,\mathcal{W})$-exact $\mathbb{E}$-triangle in $\xi$ because the first row, the first and the second columns are both $\mathcal{C}(\mathcal{W},-)$-exact and  $\mathcal{C}(-,\mathcal{W})$-exact. It follows from Lemma \ref{lem10} that   $G_{-1}\in\mathcal{GW}(\xi)$ since $G, K_{-1}\in\mathcal{GW}(\xi)$, hence there exists a both $\mathcal{C}(\mathcal{W},-)$-exact and  $\mathcal{C}(-,\mathcal{W})$-exact $\mathbb{E}$-triangle
$\xymatrix@C=2em{G_{-1}\ar[r]^{\alpha_{-2}}&W_{-2}\ar[r]^{\beta_{-2}}&K_{-2}\ar@{-->}[r]^{\delta_{-2}}&}$ in $\xi$ with $W_{-2}\in\mathcal{W}$ and $K_{-2}\in\mathcal{GW}(\xi)$. Hence there exists a commutative diagram
$$\xymatrix@C=3em{
  X\ar[r]^{g_{-1}'} \ar@{=}[d] &G_{-1} \ar[r]^{f_{-1}'} \ar[d]^{\alpha_{-2}}& H \ar[d]^{\alpha'_{-1}} \ar@{-->}[r]^0 &  \\
  X\ar[r]^{g_{-2}} & W_{-2} \ar[r]^{f_{-2}} \ar[d]^{\beta_{-2}} & Y\ar@{-->}[r]^{\rho_{-2}} \ar[d]^{\beta'_{-2}} &\\
  & K_{-2} \ar@{-->}[d]^{\delta_{-2}} \ar@{=}[r] &K_{-2}\ar@{-->}[d]^{(f'_{-1})_*\delta_{-2}}& \\
  &&&}
$$ where all rows and columns are both $\mathcal{C}(\mathcal{W},-)$-exact and  $\mathcal{C}(-,\mathcal{W})$-exact   $\mathbb{E}$-triangles in $\xi$ by Proposition \ref{lem8}(2). Proceeding this manner,  one can get a both $\mathcal{C}(\mathcal{W},-)$-exact and  $\mathcal{C}(-,\mathcal{W})$-exact  $\xi$-exact complex $\xymatrix@C=1.7em{0\ar[r]&H\ar[r]&W_{-1}\ar[r]& W_{-2}\ar[r]&\cdots}$ with each $W_{-i}\in\mathcal{W}$ for $i\geqslant 1$. Dually, we can get the following both $\mathcal{C}(\mathcal{W},-)$-exact and  $\mathcal{C}(-,\mathcal{W})$-exact $\xi$-exact complex $$\xymatrix@C=2em{\cdots\ar[r]&W_{1}\ar[r]& W_{0}\ar[r]&H\ar[r]&0}$$ with each $W_{i}\in\mathcal{W}$ for $i\geqslant 0$. Hence $H\in\mathcal{GP}(\xi)$, as desired.
 \end{proof}

By Remark \ref{rem:4.2}(1) and Theorem \ref{thm:stability}, we have the following corollary.

\begin{cor}\label{corllary2} \emph{(}see \cite[Theorems 4.1 and 4.6(2)]{Hua}\emph{)} Assume that $\mathcal{C}$ is an abelian category and $\xi$ is the class of exact sequences. If $\mathcal{W}$ is a full additive subcategory of $\mathcal{C}$ that is closed under isomorphisms, then $\mathcal{GW}^n(\xi)=\mathcal{GW}(\xi)$ for any $n\geqslant1$ and $\mathcal{GW}(\xi)$ is closed under direct summands.
\end{cor}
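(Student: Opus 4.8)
The plan is to obtain Corollary \ref{corllary2} as a direct specialization of Theorem \ref{thm:stability}, so that the only work is to verify that the classical setting is a legitimate instance of the general framework and then to translate the conclusion back through Remark \ref{rem:4.2}(1). First I would recall that an abelian category $\mathcal{C}$ is an extriangulated category $(\mathcal{C},\mathbb{E},\mathfrak{s})$ upon setting $\mathbb{E}=\mathrm{Ext}^1_{\mathcal{C}}(-,-)$ and letting the realization $\mathfrak{s}$ send an extension to the equivalence class of its associated short exact sequence; this is \cite[Example 2.13]{NP}. Under this identification the $\mathbb{E}$-triangles are exactly the short exact sequences of $\mathcal{C}$.

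Next I would check that the class $\xi$ of all short exact sequences is a proper class of $\mathbb{E}$-triangles in the sense of Definition \ref{def:proper class}: it is closed under isomorphisms and finite coproducts, it contains $\Delta_0$ since split sequences correspond to $\delta=0$, it is closed under base change and cobase change because these are realized by the pullback and pushout of short exact sequences, and it is saturated because the diagram of Lemma \ref{lem1} is the classical $3\times 3$ construction for short exact sequences. Condition (WIC) holds automatically in any abelian category: if $gf$ is a monomorphism then $f$ is a monomorphism, and if $gf$ is an epimorphism then $g$ is an epimorphism. Thus the hypotheses under which Theorem \ref{thm:stability} is proved are satisfied here. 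I would also remark that the stability argument of Theorem \ref{thm:stability} appeals only to the constructions of Section 3, namely Proposition \ref{lem8}, Lemma \ref{lem10}, and the horseshoe Lemmas \ref{lem5} and \ref{lem6}, and never invokes the existence of enough $\xi$-projectives or $\xi$-injectives; this is why the corollary may be stated for an arbitrary abelian category, matching Huang's generality.

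With the setting confirmed, I would apply Theorem \ref{thm:stability} verbatim: part (1) gives $\mathcal{GW}^n(\xi)=\mathcal{GW}(\xi)$ for every $n\geqslant 1$, and part (2) gives that $\mathcal{GW}(\xi)$ is closed under direct summands. Finally, by Remark \ref{rem:4.2}(1), when $\xi$ is the class of exact sequences and $\mathcal{W}$ is a full additive subcategory closed under isomorphisms, the Gorenstein subcategory $\mathcal{GW}(\xi)$ coincides with the Gorenstein category $\mathcal{G}(\mathcal{W})$ of Sather-Wagstaff, Sharif and White. Transporting the two conclusions along this identification recovers precisely Huang's Theorems 4.1 and 4.6(2). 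The genuine content lies entirely in Theorem \ref{thm:stability}; the only delicate point in this deduction is confirming that the class of all short exact sequences really is a proper class, in particular its saturation and its closure under base and cobase change, together with the exactness of the identification $\mathcal{GW}(\xi)=\mathcal{G}(\mathcal{W})$. Once these routine verifications are in place, the corollary is immediate.
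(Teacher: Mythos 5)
Your proposal is correct and takes essentially the same route as the paper: the paper's entire proof of this corollary is to combine Remark \ref{rem:4.2}(1) with Theorem \ref{thm:stability}, exactly as you do. The extra verifications you include (that an abelian category with $\xi$ the class of all short exact sequences is an extriangulated category with a proper class satisfying Condition (WIC)) are precisely the routine facts the paper delegates to \cite{NP} and to Remark \ref{rem:4.2}(1).
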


As a consequence of Remark \ref{rem:4.2}(2) and Theorem \ref{thm:stability}, we have the following corollary.
\begin{cor}\label{corllary3} Assume that $\mathcal{C}$ is a triangulated category and $\xi$ is a proper class of triangles. If $\mathcal{W}$ is an additive full subcategory of $\mathcal{T}$ closed under isomorphisms and $\Sigma$-stable, then $\mathcal{GW}^n(\xi)=\mathcal{GW}(\xi)$ for any $n\geqslant1$ and $\mathcal{GW}(\xi)$ is closed under direct summands.
\end{cor}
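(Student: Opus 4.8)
The plan is to prove (1) by reducing to the single step $\mathcal{GW}^2(\xi)=\mathcal{GW}(\xi)$ and then iterating. The inclusion $\mathcal{GW}(\xi)\subseteq\mathcal{GW}^2(\xi)$ is immediate: for $G\in\mathcal{GW}(\xi)$ the split $\mathbb{E}$-triangles $G\xrightarrow{1}G\to 0$ and $0\to G\xrightarrow{1}G$ assemble into the complete $\mathcal{GW}(\xi)$-resolution $\cdots\to 0\to G\xrightarrow{1}G\to 0\to\cdots$, so $G\in\mathcal{GW}^2(\xi)$. The substance is the reverse inclusion, a ``Gorenstein of Gorenstein is Gorenstein'' statement. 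Given $X\in\mathcal{GW}^2(\xi)$ with a complete $\mathcal{GW}(\xi)$-resolution, realised by both $\mathcal{C}(\mathcal{GW}(\xi),-)$-exact and $\mathcal{C}(-,\mathcal{GW}(\xi))$-exact $\mathbb{E}$-triangles $K_{i+1}\to G_i\to K_i$ with $G_i\in\mathcal{GW}(\xi)$ and $K_0=X$, I would manufacture a genuine complete $\mathcal{W}(\xi)$-resolution of $X$.

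The construction of the left half $\cdots\to W_1\to W_0\to X\to 0$ proceeds by a horseshoe-style induction. First I resolve $G_0$: since $G_0\in\mathcal{GW}(\xi)$, choose a both-exact $\mathbb{E}$-triangle $L_1\to W_0\to G_0$ with $W_0\in\mathcal{W}$ and $L_1\in\mathcal{GW}(\xi)$; feeding this and $K_1\to G_0\to X$ into Proposition \ref{lem8}(1) yields a $3\times 3$ diagram whose middle row $M_1\to W_0\to X$ is the first step of the resolution and whose second column is a both-exact triangle $L_1\to M_1\to K_1$. The invariant to preserve is that each new syzygy lies in $\mathcal{GW}(\xi)$. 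Applying \cite[Theorem 3.2]{HZZ} to $L_1\to M_1\to K_1$ and to $K_2\to G_1\to K_1$ produces an object $N_1$ sitting in a triangle $L_1\to N_1\to G_1$ with $L_1,G_1\in\mathcal{GW}(\xi)$; once this is shown both-exact, Lemma \ref{lem10} gives $N_1\in\mathcal{GW}(\xi)$, and the process repeats with $N_1$ in place of $M_1$. Here $\mathcal{C}(\mathcal{W},-)$-exactness of the spliced triangles follows from Lemma \ref{lem3}(1), whereas the $\mathcal{C}(-,\mathcal{W})$-exactness of $L_1\to N_1\to G_1$ needs a separate diagram: resolving $G_1$ by a both-exact triangle $H\to W'\to G_1$ with $W'\in\mathcal{W}$, a further $3\times 3$ diagram has a split (hence $\mathcal{C}(-,\mathcal{W})$-exact) row, and \cite[Lemma 4.10(1)]{HZZ} transports this exactness to $L_1\to N_1\to G_1$. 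Iterating, and dualising to build the right half $0\to X\to W^0\to W^1\to\cdots$, gives the desired complete $\mathcal{W}(\xi)$-resolution, so $X\in\mathcal{GW}(\xi)$.

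For (2), let $G\in\mathcal{GW}(\xi)$ with $G=H\oplus H'$; the plan is to construct a complete $\mathcal{W}(\xi)$-resolution of $H$ directly. Starting from the split $\mathbb{E}$-triangles $H\to G\to H'$ and $H'\to G\to H$ together with a both-exact triangle $G\to W_{-1}\to K_{-1}$ (with $W_{-1}\in\mathcal{W}$ and $K_{-1}\in\mathcal{GW}(\xi)$) drawn from a complete resolution of $G$, Proposition \ref{lem8}(2) produces a both-exact triangle $H'\to X\to K_{-1}$. Using that $\xi$ is closed under cobase change, I would glue along the second split triangle to obtain triangles $G\to G_{-1}\to K_{-1}$ and $X\to G_{-1}\to H$; since $G,K_{-1}\in\mathcal{GW}(\xi)$, Lemma \ref{lem10} yields $G_{-1}\in\mathcal{GW}(\xi)$. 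Thus $W_{-1}\in\mathcal{W}$ is the first term of a coresolution of $H$, and the argument repeats with $G_{-1}$ in place of $G$. Iterating produces $0\to H\to W_{-1}\to W_{-2}\to\cdots$, and the dual construction supplies $\cdots\to W_1\to W_0\to H\to 0$, whence $H\in\mathcal{GW}(\xi)$.

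The main obstacle I anticipate is the bookkeeping rather than any isolated diagram: every splicing step must simultaneously preserve both exactness conditions and return a syzygy inside $\mathcal{GW}(\xi)$, so the horseshoe lemmas (Lemmas \ref{lem5} and \ref{lem6}), the comparison Lemma \ref{lem3}, the monic/epic transfer Lemma \ref{lem4}, the gluing Proposition \ref{lem8}, and the extension-closure Lemma \ref{lem10} must be interleaved while repeatedly importing auxiliary both-exact $\mathcal{W}$-triangles from the Gorenstein hypothesis at each level. In particular, upgrading the intermediate triangles $L_1\to N_1\to G_1$ from merely $\mathcal{C}(\mathcal{W},-)$-exact to genuinely $\mathcal{C}(-,\mathcal{W})$-exact is the delicate point that makes Lemma \ref{lem10} applicable and keeps the induction running.
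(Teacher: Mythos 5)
Your proposal is correct and is essentially the paper's own argument: the paper proves this corollary in one line by specializing Theorem \ref{thm:stability} (via Remark \ref{rem:4.2}(2)) to a triangulated category regarded as an extriangulated category, and what you have written out is precisely the paper's proof of that theorem --- the split-triangle inclusion $\mathcal{GW}(\xi)\subseteq\mathcal{GW}^2(\xi)$, the horseshoe-style induction using Proposition \ref{lem8}, Lemmas \ref{lem3}, \ref{lem5}, \ref{lem6} and \ref{lem10}, the upgrade of $L_1\to N_1\to G_1$ to $\mathcal{C}(-,\mathcal{W})$-exactness via the split row and \cite[Lemma 4.10(1)]{HZZ}, and the cobase-change argument for direct summands. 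The only difference is cosmetic: you inline the proof of the general extriangulated theorem rather than citing it, so the mathematical content coincides with the paper's.
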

\begin{rem} \label{rem:4.19} We note that Corollary \ref{corllary3} refines one of main results of Yang and Wang in \cite{Yang3}. In their paper, they showed that for any triangulated category with countable coproducts, if the class $\mathcal{W}$ is closed under countable coproducts, then $\mathcal{GW}^n(\xi)=\mathcal{GW}(\xi)$ for any $n\geqslant1$ and $\mathcal{GW}(\xi)$ is closed under direct summands, see \cite[Theorem 3.3]{Yang3}.
\end{rem}

%
%

\begin{cor}\label{pro2} Given both $\mathcal{C}(\mathcal{W},-)$-exact and $\mathcal{C}(-,\mathcal{W})$-exact $\mathbb{E}$-triangle
$$\xymatrix{X\ar[r]&Y\ar[r]&Z\ar@{-->}[r]&}$$ in $\xi$ with $Y\in\mathcal{GW}(\xi)$, then $X\in\mathcal{GW}(\xi)$ if and only if $Z\in\mathcal{GW}(\xi)$.
\end{cor}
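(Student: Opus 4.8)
The plan is to prove the two implications separately. Since the $\mathbb{E}$-triangle $\xymatrix@C=1.6em{X\ar[r]&Y\ar[r]&Z\ar@{-->}[r]&}$ is both $\mathcal{C}(\mathcal{W},-)$- and $\mathcal{C}(-,\mathcal{W})$-exact and $Y\in\mathcal{GW}(\xi)$, the statements ``$X\in\mathcal{GW}(\xi)\Rightarrow Z\in\mathcal{GW}(\xi)$'' and ``$Z\in\mathcal{GW}(\xi)\Rightarrow X\in\mathcal{GW}(\xi)$'' are formally dual (reverse the arrows and exchange the roles of $\mathcal{C}(\mathcal{W},-)$- and $\mathcal{C}(-,\mathcal{W})$-exactness), so I would treat the first and obtain the second by dualising. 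In either case I would \emph{not} attempt to build a complete $\mathcal{W}(\xi)$-resolution of the unknown term directly: the quotient $Z$ (resp. the subobject $X$) cannot in general be coresolved (resp. resolved) by objects of $\mathcal{W}$ alone, precisely because $\mathcal{W}$ is not assumed closed under hocokernels of $\xi$-inflations, so Theorem \ref{thm2} is unavailable. Instead I would show that $Z$ admits a complete $\mathcal{GW}(\xi)$-resolution, i.e. that $Z\in\mathcal{GW}^2(\xi)$, and then invoke $\mathcal{GW}^2(\xi)=\mathcal{GW}(\xi)$ from Theorem \ref{thm:stability}(1).

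Assume $X,Y\in\mathcal{GW}(\xi)$. The left-hand half of a complete $\mathcal{GW}(\xi)$-resolution of $Z$ is cheap: the given triangle already exhibits a $\xi$-deflation $Y\to Z$ with hokernel $X$, and splicing it with a $\mathcal{W}(\xi)$-resolution of $X$ (equivalently, applying Theorem \ref{thm3} to the $\mathcal{W}(\xi)$-resolutions of $X$ and $Y$) yields a $\xi$-exact complex $\xymatrix@C=1.4em{\cdots\ar[r]&G_1\ar[r]&G_0\ar[r]&Z\ar[r]&0}$ with all terms in $\mathcal{GW}(\xi)$. For the right-hand half I would use that $Y\in\mathcal{GW}(\xi)$ provides a both $\mathcal{C}(\mathcal{W},-)$- and $\mathcal{C}(-,\mathcal{W})$-exact $\mathbb{E}$-triangle $\xymatrix@C=1.4em{Y\ar[r]&W_Y\ar[r]&Y^1\ar@{-->}[r]&}$ with $W_Y\in\mathcal{W}$ and $Y^1\in\mathcal{GW}(\xi)$, and apply Proposition \ref{lem8}(2) to this triangle together with $\xymatrix@C=1.4em{X\ar[r]&Y\ar[r]&Z\ar@{-->}[r]&}$. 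This produces a $3\times3$ diagram whose third column $\xymatrix@C=1.4em{Z\ar[r]&N\ar[r]&Y^1\ar@{-->}[r]&}$ and middle row $\xymatrix@C=1.4em{X\ar[r]&W_Y\ar[r]&N\ar@{-->}[r]&}$ are again both $\mathcal{C}(\mathcal{W},-)$- and $\mathcal{C}(-,\mathcal{W})$-exact, with cosyzygy $Y^1\in\mathcal{GW}(\xi)$.

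The crux is to place the new term $N$ in $\mathcal{GW}(\xi)$; once this is done, $\xymatrix@C=1.4em{Z\ar[r]&N\ar[r]&Y^1\ar@{-->}[r]&}$ is the first step of the desired coresolution and the remaining steps are simply a coresolution of $Y^1\in\mathcal{GW}(\xi)$. I would obtain $N\in\mathcal{GW}(\xi)$ by a relative Schanuel argument: the coextension $\xymatrix@C=1.4em{X\ar[r]&W_Y\ar[r]&N\ar@{-->}[r]&}$ and the canonical coresolution step $\xymatrix@C=1.4em{X\ar[r]&W_X\ar[r]&X^1\ar@{-->}[r]&}$ (with $W_X\in\mathcal{W}$, $X^1\in\mathcal{GW}(\xi)$) are two $\mathcal{C}(-,\mathcal{W})$-exact coextensions of $X$ by objects of $\mathcal{W}$, and their coproperness lets $1_X$ be lifted both ways, giving an isomorphism $N\oplus W_X\cong X^1\oplus W_Y$. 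The right-hand side lies in $\mathcal{GW}(\xi)$ (a finite direct sum of objects of $\mathcal{GW}(\xi)$, by Lemma \ref{lem10} applied to split $\mathbb{E}$-triangles), so $N\oplus W_X\in\mathcal{GW}(\xi)$, and hence $N\in\mathcal{GW}(\xi)$ because $\mathcal{GW}(\xi)$ is closed under direct summands by Theorem \ref{thm:stability}(2). Gluing the two halves produces a $\xi$-exact complex in $\mathcal{GW}(\xi)$ having $Z$ as its relevant image.

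The main obstacle, on which I would spend the most care, is checking that this complex is genuinely a complete $\mathcal{GW}(\xi)$-resolution, i.e. that each of its $\mathbb{E}$-triangles is $\mathcal{C}(\mathcal{GW}(\xi),-)$- and $\mathcal{C}(-,\mathcal{GW}(\xi))$-exact and not merely $\mathcal{C}(\mathcal{W},-)$- and $\mathcal{C}(-,\mathcal{W})$-exact. This upgrade is exactly the feature that separates the present case from Lemma \ref{lem10}, where the horseshoe of Lemmas \ref{lem5} and \ref{lem6} keeps every term inside $\mathcal{W}$. I would establish it by dimension-shifting: for $G\in\mathcal{GW}(\xi)$ one uses a $\mathcal{W}(\xi)$-resolution and coresolution of $G$ together with Lemma \ref{lem3} to transfer exactness of $\mathcal{C}(\mathcal{W},-)$ on our triangles to exactness of $\mathcal{C}(G,-)$, and dually for $\mathcal{C}(-,G)$. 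Once this is secured, $Z\in\mathcal{GW}^2(\xi)=\mathcal{GW}(\xi)$ by Theorem \ref{thm:stability}(1); the converse implication then follows by the dual construction, coresolving $X$ on the right via the inflation $X\to Y$ with cosyzygy $Z\in\mathcal{GW}(\xi)$ and building its left $\mathcal{GW}(\xi)$-resolution through Proposition \ref{lem8}(1) and the same Schanuel-plus-summand argument.
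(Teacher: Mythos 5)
Your construction coincides with the paper's own proof (dualized) for most of its length: the diagram from Proposition \ref{lem8}(2) producing the triangles $Z\to N\to Y^1$ and $X\to W_Y\to N$, the Schanuel-type comparison giving $N\oplus W_X\cong X^1\oplus W_Y$, and the conclusion $N\in\mathcal{GW}(\xi)$ via closure under finite direct sums and direct summands (Theorem \ref{thm:stability}(2)) are exactly the steps the paper takes with its object $Z'$. The genuine gap is in your endgame. You propose to read $Z\to N\to Y^1$, continued by a $\mathcal{W}(\xi)$-coresolution of $Y^1$, as the right half of a complete $\mathcal{GW}(\xi)$-resolution of $Z$, and to upgrade its $\mathcal{C}(\mathcal{W},\pm)$-exactness to $\mathcal{C}(\mathcal{GW}(\xi),\pm)$-exactness by dimension-shifting. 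That upgrade is not merely delicate; it is false. Take $\mathcal{C}$ the category of $R$-modules, $\xi$ all short exact sequences, $\mathcal{W}$ the projectives, and $G$ a Gorenstein projective module that is not projective (e.g.\ $R=k[x]/(x^2)$, $G=k$). The sequence $0\to \Omega G\to P\to G\to 0$ is $\mathcal{C}(\mathcal{W},-)$-exact and also $\mathcal{C}(-,\mathcal{W})$-exact (because $\mathrm{Ext}^1_R(G,Q)=0$ for $Q$ projective), and all three terms lie in $\mathcal{GW}(\xi)$, yet $1_G$ does not lift through $P\to G$, so it is not $\mathcal{C}(\mathcal{GW}(\xi),-)$-exact. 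Triangles of precisely this shape occur in your complex: the coresolution triangles $Y^i\to W^i\to Y^{i+1}$ of $Y^1$ fail $\mathcal{C}(-,\mathcal{GW}(\xi))$-exactness for the same reason, since the identity of $Y^i$ extends to $W^i$ only if the triangle splits. Dimension-shifting cannot repair this: given $f\colon G\to C$ and a $\xi$-deflation $W_0\to G$, a lift of the composite $W_0\to C$ says nothing about the obstruction $f^{*}\delta\in\mathbb{E}(G,A)$, which $\mathcal{C}(\mathcal{W},-)$-exactness does not annihilate.

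The repair is the paper's endgame, and it also shows that your opening judgement---that a complete $\mathcal{W}(\xi)$-resolution of $Z$ is out of reach because Theorem \ref{thm2} is unavailable---is too pessimistic. Once $N\in\mathcal{GW}(\xi)$ is known, apply Theorem \ref{thm4} to the $\mathcal{C}(-,\mathcal{W})$-exact triangle $Z\to N\to Y^1$: both $N$ and $Y^1$ lie in $\mathcal{GW}(\xi)$, hence admit coproper $\mathcal{W}(\xi)$-coresolutions that are also $\mathcal{C}(\mathcal{W},-)$-exact, so Theorem \ref{thm4}(1)(2) yields a coproper $\mathcal{W}(\xi)$-coresolution of $Z$ with terms in $\mathcal{W}$ that is $\mathcal{C}(\mathcal{W},-)$-exact as well. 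Glued with the proper $\mathcal{W}(\xi)$-resolution of $Z$ you already obtained from Theorem \ref{thm3}, this is a complete $\mathcal{W}(\xi)$-resolution, whence $Z\in\mathcal{GW}(\xi)$ directly from the definition; no appeal to $\mathcal{GW}^2(\xi)=\mathcal{GW}(\xi)$, and no exactness upgrade, is needed. (The paper writes this argument in the direction $Y,Z\in\mathcal{GW}(\xi)\Rightarrow X\in\mathcal{GW}(\xi)$, using Theorem \ref{thm4} for the coresolution of $X$ and feeding the Schanuel step into Theorem \ref{thm3}; your direction is its exact dual, and only the summand-closure part of Theorem \ref{thm:stability} is ever used.)
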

\begin{proof}
Assume that $Y,Z\in\mathcal{GW}(\xi)$, then Theorem \ref{thm4} implies that $X$ has a coproper $\mathcal{W}(\xi)$-resolution which is $\mathcal{C}(\mathcal{W},-)$-exact. Consider a both $\mathcal{C}(\mathcal{W},-)$-exact and $\mathcal{C}(-,\mathcal{W})$-exact $\mathbb{E}$-triangle
$\xymatrix{L\ar[r]&W\ar[r]&Y\ar@{-->}[r]&}$ in $\xi$ with $W\in\mathcal{W}$ and  $L\in\mathcal{GW}(\xi)$, then we have the following commutative diagram
$$\xymatrix{
   L\ar[d] \ar@{=}[r] & L \ar[d]&\\
  Z'\ar[d] \ar[r] & W \ar[d] \ar[r] & Z \ar@{=}[d]\ar@{-->}[r]& \\
  X \ar[r]\ar@{-->}[d] & Y \ar[r]\ar@{-->}[d] & Z\ar@{-->}[r]&\\
  &&&  }
$$
where all rows and columns are both $\mathcal{C}(\mathcal{W},-)$-exact and $\mathcal{C}(-,\mathcal{W})$-exact $\mathbb{E}$-triangles in $\xi$ by Proposition \ref{lem8}(1) . Note that $Z\in\mathcal{GW}(\xi)$, there exists a both $\mathcal{C}(\mathcal{W},-)$-exact and $\mathcal{C}(-,\mathcal{W})$-exact $\mathbb{E}$-triangle  $\xymatrix{K\ar[r]&V\ar[r]&Z\ar@{-->}[r]&}$ in $\xi$ with $V\in\mathcal{W}$ and $K\in\mathcal{GW}(\xi)$. So we have the following commutative diagram
$$\xymatrix{
   &K \ar[d] \ar@{=}[r] & K \ar[d]\\
  Z'\ar@{=}[d] \ar[r] & N\ar[d] \ar[r] & V \ar[d]\ar@{-->}[r]& \\
  Z'\ar[r] & W \ar[r]\ar@{-->}[d] & Z\ar@{-->}[r]\ar@{-->}[d]&\\
  &&&  }
$$
where all rows and columns are $\mathbb{E}$-triangles in $\xi$.   Moveover the second row and column are $\mathcal{C}(\mathcal{W},-)$-exact by Lemma \ref{lem3}(1). Hence $K\oplus W\cong Z'\oplus V$, which implies that $Z'\in \mathcal{GW}(\xi)$ because $\mathcal{GW}(\xi)$ is closed under direct summands by Theorem \ref{thm:stability}(2). Hence $X$ has a proper $\mathcal{W}(\xi)$-resolution which is $\mathcal{C}(-, \mathcal{W})$-exact by Theorem \ref{thm3}. Therefore $X\in\mathcal{GW}(\xi)$.

Dually, we can prove that $Z\in\mathcal{GW}(\xi)$ whenever $X, Y\in\mathcal{GW}(\xi)$.
\end{proof}
\vspace{1cm}

\renewcommand\refname{\bf References}

\end{document}